\theoremstyle{plain}
\newtheorem{thm}{Theorem}[section]
\newtheorem{lemma}[thm]{Lemma}
\newtheorem{cor}[thm]{Corollary}
\theoremstyle{definition}
\theoremstyle{remark}
\newtheorem{rem}{Remark}
\numberwithin{equation}{section}
\newcommand{\R}{\mathbf{R}}
\newcommand{\G}{\mathrm G}
\title[Holomorphic isometric embeddings]
{Holomorphic isometric embeddings of  complex Grassmannians into quadrics: The general case}
\author{Oscar MACIA, Yasuyuki NAGATOMO}
\address{Department of Mathematics, UNIVERSITY OF VALENCIA, 
C. Dr. Moliner, S/N, (46100) Burjassot, SPAIN} 
\email{oscar.macia@uv.es}
\address{Department of Mathematics, MEIJI UNIVERSITY, 
Higashi-Mita, Tama-ku, Kawasaki-shi, Kanagawa 214-8571, JAPAN} 
\email{yasunaga@meiji.ac.jp}
\keywords{Moduli spaces,  Holomorphic isomorphic embeddings, Grassmannian, complex quadric, vector bundles}
\subjclass[2010]{Primary 32H02; Secondly 53C07}
\begin{document}

\begin{abstract}
The present article studies holomorphic isometric embeddings
of  arbitrary complex Grassmannians into quadrics, generalising
results in \cite{MaNa21}. 
The moduli spaces of these embeddings up to 
gauge and image equivalence are discussed using a generalisation of do Carmo--Wallach theory.
\end{abstract}
\maketitle


\section{Introduction}

The topic of holomorphic isometric embeddings between complex manifolds has a long history in complex geometry. For the enriched subclass of K\"ahler manifolds, the earliest work dates back to Calabi's seminal 1953 paper \cite{Cal}, and the field has remained  active  since its inception.

More recently, a  general theory describing the moduli space of holomorphic isometric embeddings of K\"ahler manifolds into Grassmannians  has been developed  in \cite{Na13, Na15}. The foundations of this theory, a generalisation of ideas of Takahashi \cite{TTaka}, and of do Carmo \and Wallach \cite{DoC-Wal},   are  deeply rooted in the principles of gauge theory. Therefore, techniques  based on vector bundles and the representation theory of Lie groups play a prominent 
{\it r\^ole} in it. 

As  a practical application of this generalisation of do Carmo--Wallach theory, the description of the  moduli  space when the embedded manifold is
a complex projective space and the target is a complex hyperquadric (identified as an appropriate real Grassmannian) is by now well--understood: The $\mathbf C \mathrm P^1$ case has been discussed at length in \cite{MNT, MacNag}, and the generalisation to $\mathbf C \mathrm P^n$ has been recently  considered in \cite{Na21}.

In spite of this, the moduli spaces of holomorphic isometric embeddings of complex Grassmannian manifolds into quadrics are much less familiar. To improve this situation, the authors studied first a simplified version of the problem in  \cite{MaNa21}:  the moduli of holomorphic isomorphic embeddings of the complex Grassmann manifolds 
$\mathrm{Gr}_m(\mathbf{C}^{m+2})$ of all two--codimensional linear subspaces in $\mathbf{C}^{m+2}$ into quadrics. This particular Hermitian
 symmetric space is in itself worthy of attention from the intrinsically geometric point of view, as the
unique compact, K\"ahler, quaternionic K\"ahler manifold with positive scalar curvature \cite{Besse, Salamon, Wolf}. 
On the other hand that special case is simple enough to provide a model to the intricate representation--theoretic computations that underpin the general theory.  In particular, it provides a pathway for the decomposition of  the space ${\rm GS}(\mathfrak m V_0,V_0)$  (\cite{MaNa21}, \S 2.5) into irreducible representations. 
 
 While Young {\it tableaux} and Littlewood--Richardson rules are recognised as the right tools  to systematically determine the decomposition of tensor products of  ${\rm SU}(n)$ representations \cite{Ful}, these are nonetheless insufficient to irreducibly decompose ${\rm GS}(\mathfrak m V_0,V_0),$ since Littlewood did not laid  a straightforward method to determine which terms of the decomposition belong to the symmetric or skewsymmetric parts \cite{Littlewood}.  
Though more modern algebro--combinatorial techniques (aka {\it domino  tableaux}, \cite{domino}) have been
developed which would allow to decompose into
irreducible representations while keeping track of the general symmetry or skewsymmetry, we have kept with our original approach based on  working out the action of the centre of the isotropy subgroup on the highest and lowest--weight vectors, in a way
 similar to, if more complicated than, that of \cite{MaNa21}
 (compare \S 3 with {\it loc.cit.} Lemmas 4.1 to 4.4). The special conditions which, due to the low dimensionality, allow the representations to be decomposed with minimum difficulty disappear in this article, and the parameterisation of the highest--weight modules becomes a delicate issue which is solved in detail in Lemma 3.1.

 This article, being a sequel to \cite{MNT,MacNag,Na21} and a generalisation of the discussion in \cite{MaNa21},  is based on the same general theory as its predecessors. Therefore, the authors have not included material that could easily be found elsewhere.  
This theoretical minimum (natural evaluations and identifications, image and gauge equivalences, vector bundles on Hermitian symmetric spaces, standard maps, and  do Carmo-Wallach theory, together with some motivation) can be found described in detail, for example, in \cite{MaNa21}, \S\S 2 and 3, and the reader would do well to consult it to settle the definitions and context of the present work.

The structure of the article is the following:
After some preliminary remarks which have been briefly collected in section 2, and which serve as a bridge between the general theory mentioned above and the concrete results of the present work, section 3 is devoted to the detailed computation of the irreducible $\mathrm{SU}(p+q)$ representations which make up  the moduli,
 culminating in Theorem \ref{gmod1} where its geometry is determined. The relation between the moduli up to gauge and up to image equivalence is discussed in Corollary \ref{imod}.
 Lastly, Section 4 deals with the special case  $p=q,$ that is, embeddings of $\mathrm{Gr}_q(\mathbf C^{2q}),$ which generalises the example of the compactified, complexified Minkowski space $\mathrm{Gr}_2(\mathbf C^4)$ relevant in
 certain areas of high energy theoretical physics.
 
 The authors intend, in a subsequent report, to further generalise the contents of this article  to study the case of holomorphic isometric embeddings of  flag varieties into complex hyperquadrics.

\indent\paragraph{\bf Acknowledgements}
The work of O.M. is partially supported by  the AEI (Spain) and FEDER project  PID2019-105019GB-C21, and by the  GVA project AICO 2021 21/378.01/1.
The work of Y.N. is supported by JSPS KAKENHI Grant Number 21K03236.
The authors would like to thank  Profs. Simon Salamon (K.C.London), Andrew Swann (Aarhus U.), Uwe Semmelmann (Stuttgart U.) and Rowena Paget (Kent U.), for useful conversations.

\section{Preliminary remarks}
From this point, familiarity with the generalisation of the do Carmo--Wallach theory, at the level of \cite{MaNa21}, \S\S 2 and 3,  is assummed throughout. The general picture of the theory is the following:
Suppose that a vector bundle  $V\to M$ and a finite--dimensional subspace $W$ of the space of sections $\Gamma(V)$ is given. If the evaluation homomorphism $ev : M\times W \to V$  is surjective (so that $V\to M$ is  globally generated by $W$), assigning to each $x\in M$ the $p$--plane $\text{Ker}\,ev_x\subset \mathrm{Gr}_p(W)$
defines an induced map $f:M\to \mathrm{Gr}_p(W).$ Using it to pull 
the natural exact sequence over $\mathrm{Gr}_p(W)$ back to $M$  it can be proved that $V\to M$ is naturally identified with $f^*Q\to M$ where $Q\to \mathrm{Gr}_p(W)$ stands for the universal quotient bundle. In case that one deals with vector bundles equipped with metrics and connections, additional differential conditions need to be considered (e.g., the gauge condition which states that the connection on $V\to M$ must be gauge equivalent to the connection on $f^*Q\to M$). In the present article $M$ stands for a compact,
irreducible, Hermitian symmetric space, and $V\to M$ will be substituted by a complex  homogeneous line bundle. The subspace of sections $W\subset\Gamma(V)$ will be the space of holomorphic sections $H^0(V)\subset \Gamma(V)$ since in this context this is the space which determines the {\it standard maps} (the general definition for standard map being those induced by sets of sections with the same eigenvalue for the Laplace operator acting on sections).
Regard $W$ as a real vector space  and fix a $\mathbf R^{n+2}$ subspace of $W,$ with inclusion map $\iota.$ This being a real space of sections $\mathbf{R}^{n+2}$ also generates $V\to M$ globally. The generalisation of the
theory of do Carmo--Wallach shows that this $\mathbf R^{n+2}$ can be regarded as the orthogonal complement of the kernel of a certain
positive semi-definite symmetric endomorphism $T$ of $W,$ which is indeed positive definite on $\mathbf R^{n+2}.$  Specifically, 

\begin{thm} \label{GenDW}

Let $(L,h)$ be a fixed Hermitian line bundle on $\mathrm{G}/\mathrm{K},$ and let
$f:{\rm G/K} \to {\rm Gr}_n(\mathbf R^{n+2})$ be a full holomorphic map
satisfying the {\it gauge condition for} $(L,h).$ 
Let $W$ denote $H^0({\rm G/K},L)$ regarded as a real vector space equipped with a complex structure.
Then, there is a 
positive semi-definite symmetric endomorphism 
$T\in \text{\rm S}\,(W)$ 
such that the pair $(W,T)$ satisfies the following three conditions:\\
\begin{enumerate}[(a)]
\item The vector space $\mathbf{R}^{n+2}$ is a subspace of $W$ with the inclusion 
$\iota:\mathbf{R}^{n+2} \to W$ preserving the inner products, and 
$L\to M$ is globally generated by $\mathbf{R}^{n+2}$.
\item As a subspace, $\mathbf{R}^{n+2}=\text{\rm Ker}\,T^{\bot}$ and 
the restriction of $T$  is a positive symmetric endomorphism of $\mathbf{R}^{n+2}$. 
\item The endomorphism $T$ satisfies 
\begin{equation*}\label{HDW 2}
\left(T^2- I\!d_W, \G\mathrm{S}(V_0, V_0)\right)_{S}=0, 
\qquad
\left(T^2, \G\mathrm{S}(\mathfrak m V_0, V_0)\right)_{S}=0.
\end{equation*}
\end{enumerate}
\bigskip

If $\iota^{\ast}:W \to \mathbf{R}^{n+2}$ denotes the adjoint linear map of 
$\iota:\mathbf{R}^{n+2} \to W$, 
then $f:{\rm G/K} \to \mathrm{Gr}_n(\mathbf{R}^{n+2})$ is realized as 
 \begin{equation}\label{HDW5} 
f\left([g]\right)=\left(\iota^{\ast}T\iota \right)^{-1}\left(f_0\left([g]\right)\cap \text{\rm Ker}\,T^{\bot}\right), 
\end{equation}
where the orientation of 
$\left(\iota^{\ast}T\iota \right)^{-1}\left(f_0\left([g]\right)\cap \text{\rm Ker}\,T^{\bot}\right)_{[g]}$ 
is given by the orientation of $L_{[g]}$ and $\mathbf{R}^{n+2}$. 
Moreover, if the orientation of $\text{Ker}\,T$ is fixed, then 
we have a unique holomorphic totally geodesic embedding of 
$\mathrm{Gr}_n(\mathbf{R}^{n+2})$ into $\mathrm{Gr}_{n^{\prime}}(W)$ 
by $\iota\left(\mathbf{R}^{n+2}\right)=\text{\rm Ker}\,T^{\bot}$, 
where $n^{\prime}=n+\text{\rm dim}\,\text{\rm Ker}\,T$
and a bundle isomorphism   
$\left(ev \circ \iota\circ\left(\iota^{\ast} T \iota\right)\right)^{\ast}
:L\to f^{\ast}Q$ as the natural identification by $f$.  
Such two maps $f_i$, $(i=1,2)$ are gauge equivalent if and only if 
$
\iota^{\ast}T_1\iota=\iota^{\ast}T_2\iota, 
$
where $T_i$ and $\iota$ correspond to $f_i$ in \eqref{HDW5}, respectively.\\ 

Conversely, 
suppose that a vector space $\mathbf{R}^{n+2}$ with an inner product and an orientation, and 
a positive semi-definite symmetric endomorphism 
$T\in \text{\rm End}\,(W)$ 
satisfying 
conditions (a),(b),(c) are given.  
Then we have a unique holomorphic totally geodesic embedding of 
$\mathrm{Gr}_n(\mathbf{R}^{n+2})$ into $\mathrm{Gr}_{n^{\prime}}(W)$ after fixing the orientation of 
$\text{Ker}\,T$  
and 
the map $f:G/K \to \mathrm{Gr}_{p}(\mathbf{R}^{n+2})$ defined by \eqref{HDW5} 
is a full holomorphic map into $\mathrm{Gr}_n(\mathbf{R}^{n+2})$ 
satisfying the gauge condition with bundle isomorphism $L\cong f^{\ast}Q$ 
as the natural identification.

\end{thm}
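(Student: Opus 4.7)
The plan is to adapt the general do Carmo--Wallach machinery in the form developed in \cite{Na13, Na15, MaNa21} to the Hermitian line bundle setting, turning each half of the biconditional into an explicit recipe. For the direct part, I would compare the given $f$ with the \emph{standard map} $f_0:\G/\K \to \mathrm{Gr}_{n_0}(W)$ induced by the whole space of holomorphic sections $W=H^0(L)$. Since $\mathbf{R}^{n+2}\subset W$ already generates $L$ globally, the inclusion $\iota$ identifies $\mathrm{Gr}_n(\mathbf{R}^{n+2})$ with a totally geodesic submanifold of $\mathrm{Gr}_{n'}(W)$ via $\iota(\mathbf{R}^{n+2})=\text{Ker}\,T^{\bot}$. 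The endomorphism $T$ is then constructed as the unique positive semi--definite symmetric operator on $W$ with kernel $\iota(\mathbf{R}^{n+2})^{\bot}$ whose restriction to $\iota(\mathbf{R}^{n+2})$ records the deviation of $f$ from the naive composition $\iota\circ f_0$. Properties (a) and (b) are then immediate from the construction.

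Condition (c) is obtained by unpacking the isometric and gauge conditions for $f$. The first identity $(T^2-I\!d_W,\mathrm{GS}(V_0,V_0))_S=0$ is the infinitesimal statement that the bundle identification $L\cong f^\ast Q$ preserves the Hermitian metric on the fibre at the base point: the pairing against $\mathrm{GS}(V_0,V_0)$ captures precisely the fibrewise symmetric part of the metric pullback. The second identity $(T^2,\mathrm{GS}(\mathfrak m V_0,V_0))_S=0$ encodes compatibility between the Chern connection of $L$ and the pullback of the universal quotient connection on $Q$: differentiating the evaluation relation along $\mathfrak m$ produces terms in $\mathrm{GS}(\mathfrak m V_0,V_0)$, and the gauge condition forces them to be orthogonal to $T^2$.

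For the converse, suppose $T$ satisfies (a)--(c) and define $f$ by \eqref{HDW5}. Holomorphicity follows from compatibility of $T$ with the complex structure on $W$; fullness from $\mathbf{R}^{n+2}=\text{Ker}\,T^{\bot}$; and the isometric and gauge conditions are precisely the content of (c) read in reverse. The assertion that $\left(ev\circ\iota\circ(\iota^\ast T\iota)\right)^{\ast}:L\to f^\ast Q$ is the natural identification is then forced by the evaluation homomorphism. The gauge equivalence characterisation reduces to the observation that only the Gram matrix $\iota^\ast T\iota$ on $\mathbf{R}^{n+2}$ is detectable from $f$ alone, the remaining data in $T$ being absorbed into the choice of orientation of $\text{Ker}\,T$ and the totally geodesic embedding $\mathrm{Gr}_n(\mathbf{R}^{n+2})\hookrightarrow\mathrm{Gr}_{n'}(W)$.

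The main technical obstacle will be the precise translation of the infinitesimal isometric and gauge conditions into the two orthogonality relations appearing in (c). This requires expanding $f^\ast g_{\mathrm{Gr}}$ and $f^\ast \nabla^Q$ in terms of the evaluation homomorphism and $T$, and verifying that the surviving symmetric--endomorphism pieces lie \emph{exactly} in $\mathrm{GS}(V_0,V_0)$ and $\mathrm{GS}(\mathfrak m V_0,V_0)$ respectively, with no extra equations left over. Once this translation is in place both directions of the equivalence and the gauge--equivalence statement follow essentially formally.
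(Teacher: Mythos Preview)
The paper does not actually prove Theorem~\ref{GenDW}; it is quoted verbatim as a result ``originally proved in \cite{Na13}'' and treated as part of the background machinery (see the paragraph immediately following the theorem statement). So there is no in-paper proof to compare your proposal against.

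That said, your sketch is broadly consistent with the strategy of \cite{Na13,Na15}: construct $T$ by comparing $f$ with the standard map $f_0$, read off (a) and (b) from the construction, and translate the metric and connection compatibility into the two orthogonality conditions in (c). As a plan this is the right shape. What you have written, however, is an outline rather than a proof: the sentence ``the pairing against $\mathrm{GS}(V_0,V_0)$ captures precisely the fibrewise symmetric part of the metric pullback'' and the analogous claim for $\mathrm{GS}(\mathfrak m V_0,V_0)$ are exactly the nontrivial computations, and you acknowledge this yourself in your final paragraph. To turn this into an actual proof you would need to reproduce the evaluation-map calculus from \cite{Na13} (expressing $f^\ast Q$, its metric, and its connection in terms of $ev$, $\iota$, and $T$) and show that the resulting conditions on $T^2$ are \emph{equivalent} to, not merely implied by, the two equations in (c). Without that, both directions of the biconditional remain assertions.
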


Theorem \ref{GenDW}, originally proved in \cite{Na13}, gives the representation theoretic conditions that
such endomorphism $T$ must satisfy (condition (c) in the theorem). Each such $T$ determines a new holomorphic embedding $\mathrm{Gr}_n(\mathbf R^{n+2})\to \mathrm{Gr}_{n'}(W)$ where $n'=n+\dim\ker T.$ The theoretical  expression for the holomorphic isometric embedding $f$ constructed  from the standard map $f_0,$ the symmetric endomorphism $T,$ and the inclusion map $\iota$ is given by Equation (\ref{HDW5}). Each symmetric endomorphism $T$ determines a unique gauge equivalence class of maps.
 Therefore, knowledge of the representation spaces where $T$ lives  determines all the relevant differential geometric information of the moduli space up to gague equivalence $\mathcal M_k,$ in particular its dimension, and the meaning of the boundary points in the compactification.

In the present context, Theorem \ref{GenDW} allows 
to determine the  moduli space $\mathcal M_k$ of holomorphic isometric embeddings of degree $k$ modulo  the gauge equivalence of maps due to the special characterisation,  in terms of vector bundles and pull--back connections, to which  these maps are subject in the generalisation of do Carmo--Wallach theory:
Let $\mathcal Q_n$ denote the complex quadric hypersurface in $\mathbf C P^{n+1},$ identified with $\mathrm{Gr}_n(\mathbf{R}^{n+2})$ as in \cite{Kob}, pp.278--282, and let $Q\to\mathcal{Q}_n$ be the universal quotient bundle over $\mathcal Q_n.$ Analogously,
let $\widetilde Q\to\mathrm{Gr}_p(\mathbf{C}^{p+q})$ denote the universal quotient bundle over $\mathrm{Gr}_p(\mathbf{C}^{p+q}),$
which satisfies $\mathcal O(1)\cong\wedge^q\widetilde Q.$ Now, it is a  fact in the general theory that $f:\mathrm{Gr}_p(\mathbf{C}^{p+q})\to\mathcal Q_n$ is a holomorphic isometric embedding
of degree $k$ if and only if the pull--back by $f$ of the canonical connection on $Q\to \mathcal Q_n$ coincides with the canonical connection on $\mathcal O(k)\to\mathrm{Gr}_p(\mathbf{C}^{p+q}),$
 that is, if and only if $f$ satisfies the gauge condition for $\left( \mathcal O(k), h_k \right),$ where $h_k$ denotes the standard Einstein--Hermitian metric on $\mathcal O(k)\to \mathrm{Gr}_p(\mathbf{C}^{p+q}).$
Since the canonical connection on $\left( \mathcal O(k), h_k \right)$ is  the Hermitian connection, 
Theorem \ref{GenDW} allows 
to determine the  moduli space $\mathcal M_k.$ 

In general terms, if $(L,h)$ is a given Hermitian holomorphic line bundle over a K\"ahler manifold,  $H^0(M,L)$ inherits a complex structure commuting
with the evaluation  $ev:\underline{H^0(M,L)}\to L.$
Any complex subspace of $H^0(M,L)$ can also be regarded as a real
vector space $U$ equipped with this complex structure and an inner product inherited from the $L^2$--Hermitian product on $H^0(M,L),$ and the couple $(L\to M, U)$ induces a map of
$M$ into a quadric. If this map is a holomorphic isometric immersion, commutativity between the complex structure and
the evaluation map allows to regard it as a holomorphic isometric immersion into a totally geodesic complex projective space contained in the target quadric. 
If the Kodaira embedding into a complex projective space, which is the induced map by $(\mathcal O(k)\to \mathrm{Gr}_p(\mathbf{C}^{p+q}),$ $\; H^0(\mathrm{Gr}_p(\mathbf{C}^{p+q}),\mathcal O(k)))$  is a holomorphic isometric embedding, then it is rigid  \cite{Cal}. When both the bundle and the space of sections  inducing the  
map 
are regarded as  
real and oriented, the Kodaira embedding can
also be  viewed  as a holomorphic isometric embedding into a quadric. 
Therefore, the characterisation of the moduli space of full holomorphic isometric embeddings $\mathrm{Gr}_p(\mathbf{C}^{p+q})\to \mathcal Q_n$ now follows from Theorem 6.10 of \cite{Na15},  

\begin{thm}
\label{quadmodcpx}
Let $(L,h)$ be a Hermitian holomorphic line bundle over a K\"ahler manifold 
$M$. 
Let 
$\mathcal M$ be the moduli space of 
full holomorphic isometric immersions of 
$M$ into a quadric $\mathrm{Gr}_n(\mathbf{R}^{n+2})$ 
with the gauge condition for $(L,h)$ modulo gauge equivalence relation of maps.   

Suppose that  $\mathbf{R}^{n+2}$ is a {\it complex}\, subspace of $H^0(M,L)$ and 
the inner product on $\mathbf{R}^{n+2}$ is compatible with the complex structure. 
If there exists $f \in \mathcal M$ such that the evaluation map 
$ev:\underline{\mathbf{R}^{n+2}}\to L$ by $f$ satisfies $J_L\circ ev=ev\circ J$,  
then $\mathcal M$ has the induced complex structure and 
is an open submanifold of a complex  
subspace of $\mathrm  H(\mathbf{R}^{n+2})^\perp$. 
\end{thm}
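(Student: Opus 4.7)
My plan is to combine the explicit parametrisation from Theorem~\ref{GenDW} with an analysis of how the hypothesised $J$-compatibility propagates through the defining equations of the moduli.

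First, by Theorem~\ref{GenDW}, $\mathcal M$ is in bijection with the space of positive semi-definite symmetric endomorphisms $T\in \mathrm S(W)$ of $W=H^0(M,L)$ satisfying conditions (a)--(c), two such being gauge equivalent iff $\iota^{\ast} T \iota$ coincide. The hypothesis $J_L\circ ev=ev\circ J$ for the given $f\in\mathcal M$ translates, through the realisation \eqref{HDW5} and the definition of $ev$, into the statement that the corresponding endomorphism $T_f$ commutes with the complex structure $J$ on $W$; equivalently, $\mathbf R^{n+2}=(\text{\rm Ker}\, T_f)^{\perp}$ is a complex subspace of $W$ and $T_f|_{\mathbf R^{n+2}}$ is $J$-Hermitian. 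This supplies a distinguished base point about which the moduli can be linearised.

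Next, the real vector space $\mathrm S(\mathbf R^{n+2})$ of symmetric endomorphisms splits orthogonally into $J$-commuting and $J$-anticommuting parts, the former being $\mathrm H(\mathbf R^{n+2})$ and the latter its orthogonal complement $\mathrm H(\mathbf R^{n+2})^{\perp}$, which carries a natural complex structure given by left multiplication by $J$. Because $\iota^{\ast}T_f \iota$ lies entirely in $\mathrm H(\mathbf R^{n+2})$, the gauge-inequivalent deformations of $f$ are controlled by the perturbation $T^2-T_f^2$, and I would decompose this perturbation into its $J$-commuting and $J$-anticommuting summands, using condition (c) to locate the relevant component in $\mathrm H(\mathbf R^{n+2})^{\perp}$.

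The crucial point is then to verify that the conditions $(T^2-\mathrm{Id}_W,\mathrm{GS}(V_0,V_0))_S=0$ and $(T^2,\mathrm{GS}(\mathfrak m V_0,V_0))_S=0$ are \emph{complex-linear} in the deformation of $T^2$ with respect to the complex structure on $\mathrm H(\mathbf R^{n+2})^{\perp}$. This follows once one observes that the target spaces $\mathrm{GS}(V_0,V_0)$ and $\mathrm{GS}(\mathfrak m V_0,V_0)$ inherit a complex structure from the complex structure on sections of $L$, and that the pairing $(\cdot,\cdot)_S$ intertwines it with left multiplication by $J$ on endomorphisms. Granting this, $\mathcal M$ is cut out inside $\mathrm H(\mathbf R^{n+2})^{\perp}$ by complex-linear equations and is an open subset (by the positive semi-definiteness of $T$) of a complex subspace, yielding the claim.

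The principal obstacle is precisely this last intertwining step: one must keep careful track of two a priori distinct complex structures, one on symmetric endomorphisms of $\mathbf R^{n+2}$ (coming from $J$) and one on the spaces of symmetric bilinear forms on sections of $L$ (coming from the complex structure of $L$), and verify that they are compatible via the pairing $(\cdot,\cdot)_S$. Once this compatibility is in place, the complex-analyticity of the moduli space follows without further difficulty.
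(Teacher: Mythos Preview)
The paper does not contain its own proof of this statement: Theorem~\ref{quadmodcpx} is quoted verbatim as Theorem~6.10 of \cite{Na15}, with no argument given here. So there is nothing in the present paper to compare your proposal against, and a genuine comparison would require consulting \cite{Na15}.

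Assessing your proposal on its own terms, the overall architecture is sensible: parametrise $\mathcal M$ via Theorem~\ref{GenDW}, split $\mathrm S(\mathbf R^{n+2})$ into its $J$-commuting part $\mathrm H(\mathbf R^{n+2})$ and $J$-anticommuting part $\mathrm H(\mathbf R^{n+2})^{\perp}$, and then argue that the defining equations are complex-linear for the complex structure $T\mapsto JT$ on the latter. However, two points remain genuinely unresolved.

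First, there is a tension you do not address. You deduce that $\iota^{\ast}T_f\iota\in\mathrm H(\mathbf R^{n+2})$, yet the conclusion asserts $\mathcal M\subset\mathrm H(\mathbf R^{n+2})^{\perp}$. These are consistent only if the distinguished $f$ is forced to be the standard map ($T_f=\mathrm{Id}$, so $T_f^2-\mathrm{Id}=0$ lies trivially in $\mathrm H(\mathbf R^{n+2})^{\perp}$), and indeed the argument needs that the Hermitian component of $T^2-\mathrm{Id}$ vanishes for \emph{every} $T$ parametrising a point of $\mathcal M$, not just for the base point. This is where the first equation in condition (c) does real work, and you have not explained why orthogonality to $\mathrm{GS}(V_0,V_0)$ kills the entire Hermitian part.

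Second, the step you yourself flag as the ``principal obstacle'' is not merely a bookkeeping issue but the heart of the matter: showing that $\mathrm{GS}(V_0,V_0)$ and $\mathrm{GS}(\mathfrak m V_0,V_0)$ are $J$-stable subspaces of $\mathrm S(W)$ (so that orthogonality to them is preserved by $T^2\mapsto JT^2$) requires knowing how $J$ interacts with the $\mathrm G$-module structure of these spaces. You assert an intertwining but do not supply it; without it the complex-linearity of the constraints, and hence the complex structure on $\mathcal M$, is not established. As written, then, the proposal is an outline with the decisive step left open.
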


As a consequence of the previous discussion, the moduli up to gauge equivalence of maps, of full holomorphic isometric embeddings 
$\mathrm{Gr}_p(\mathbf{C}^{p+q})\to \mathcal Q_n$  satisfying the
  gauge condition for 
$(\mathcal O(k) \to \mathbf{C}P^m,h_k)$ 
 is an open submanifold which according to the general theory lies in the  orthogonal complement $\mathrm  H(\mathbf{R}^{n+2})^\perp$ of the complex  
subspace of Hermitian endomorphisms  of the space of symmetric endomorphisms $\mathrm{Sym}(\mathbf R^{n+2}),$ 
where $U\equiv \mathbf{R}^{n+2}=H^0\left(\mathrm{Gr}_p(\mathbf{C}^{p+q}), \mathcal O(k)\right)$. 

\section{Moduli spaces}

In the following paragraphs   
we compute   the specific $\mathrm{SU}(p+q)$ representations which, according to Theorem \ref{GenDW} and \S 2 make up the moduli space up to gauge equivalence. In order to apply the   
generalisation of the do Carmo--Wallach theory  a detailed understanding of the  symmetric endomorphisms of the space $W=H^0({\rm G/K},L)$  of holomorphic sections of certain line bundles over ${\rm G/K}$ is essential.

The Grassmannian of $p$--planes in $\mathbf C^{p+q}$ is described as a Hermitian symmetric space by the symmetric pair 
$(\mathrm G,\mathrm K)=\left({\rm SU}(p+q),\; 
\mathrm{S} \left(\mathrm{U} (p) \times \mathrm{U} (q)\right) \right)$ such that
\[\mathrm{Gr}_p(\mathbf C^{p+q})=\frac{{\rm SU}(p+q)}{\mathrm S (\mathrm U(p)\times \mathrm U(q))}\]
The isotropy group $\mathrm K={\rm S}(\mathrm U(p)\times \mathrm U(q))$  is given by matrices  of the form
\[\begin{pmatrix}
A & O \\
O & B
\end{pmatrix}, \quad A \in \text{U}(p), \,\,B \in \text{U}(q),\qquad |A||B|=1.
\]
Let us fix a maximal torus $T^n$ inside $\mathrm U(n)$ defined as the subgroup of diagonal matrices, 
\[ {\rm diag}(x_1,\dots,x_n)=\left(\begin{array}{ccc}
x_1 & & \\
       &\ddots & \\
       &        & x_n
       \end{array}
\right)\]
and denote by $\mathbf{V}_n(\lambda)$  
an irreducible complex representation of 
$\text{U}(n)$ with the highest weight 
$\lambda=(\lambda_1,\lambda_2, \cdots,\lambda_n)$, 
where $\lambda_1 \geq \lambda_2 \geq \cdots \geq \lambda_n$ are integers.  The action of the maximal torus on the highest--weight vector $\hat w\in \mathbf{V}_n(\lambda)$  is by definition
\[{\rm diag}(x_1,\dots,x_n)\cdot \hat w = x_1^{\lambda_1}\dots x_n^{\lambda_n}\hat w.\]
If $\mathbf{V}_n(\lambda)$ is regarded as an ${\rm SU}(n)$ representation,
it will  satisfy $\mathbf{V}_n(\lambda)=\mathbf{V}_n(\lambda+t(1,\dots,1)),$  $t\in\mathbf Z,$ so by convention we will
choose $t=-\lambda_n$ so the last weight becomes zero.
The {\it 
dominant integral weights} of ${\rm SU}(n),$ 
denoted by $\varpi_i\; (1\leq i \leq n-1),$ are defined by
$\lambda_1=\dots=\lambda_i=1,\; \lambda_{i+1}=\dots=\lambda_n=0.$ The ${\rm SU}(n)$ irreducible representation with highest weight $\varpi=\sum_i k_i\varpi_i$ will be denoted by $\mathbf{F}_n(\varpi).$ 
 The relation between  
 $\mathbf{F}_n(\varpi)$   and $\mathbf{V}_n(\lambda)$ is
\begin{equation}\label{FtoV}
\mathbf{F}_n\left(\sum_{i=1}^{n-1} k_i\varpi_i\right)
=\mathbf{V}_n\left(\sum_{i=1}^{n-1} k_i, \sum_{i=2}^{n-1} k_i, \cdots, k_{n-2}+k_{n-1},k_{n-1},0\right). 
\end{equation}
As it is customary,  the subindex $n$ in $F_n,\;V_n$ will be usually neglected when it is clear enough. If $\varpi=\sum_i k_i\varpi_i$ is a dominant integral weight, associated to the ${\rm SU}(n)$ representation $\mathbf{F}(\varpi),$ the action of an element of the maximal torus of $\mathrm U(n)$ on the highest  weight vector $\hat w\in \mathbf{F}(\varpi)$ is given by 
\[{\rm diag}(x_1,\dots,x_{n})\cdot \hat w=x_1^{\lambda_1}\dots x_{n-1}^{\lambda_{n-1}}\hat w\] 

The action of the torus of $\mathrm U(n)$ on the representation
dual to $\mathbf{V}(\lambda)$ is determined as follows: Since the dual representation of $\mathbf{V}(\lambda)$ has lowest weight equal to $-\lambda,$ the torus of ${\rm U}(n)$ acts on the lowest weight vector $\check w\in \mathbf{F}(\varpi)$ as it does on the highest weight vector of $\mathbf{F}(-\varpi ')$ with
\[
\varpi'=-\sum_{i=1}^{n-1} k_i\varpi_{n-i}=-\sum_{j=1}^{n-1} k_{n-j}\varpi_j. 
\] 
Therefore
\begin{eqnarray*}
{\rm diag}(x_1,\dots ,x_{n})\cdot \check w 
&=&
x_1^{-\sum_{j=1}^{n-1} k_{n-j}} 
x_2^{-\sum_{j=2}^{n-1} k_{n-j}}\cdots 
x_{n-2}^{-k_{2}-k_{1}} 
x_{n-1}^{-k_{1}} \check w\\
&=&
 x_1^{-\sum_{j=1}^{n-1} k_{j}} 
x_2^{-\sum_{j=1}^{n-2} k_{j}} \cdots  
x_{n-2}^{-k_{2}-k_{1}} 
x_{n-1}^{-k_{1}}\check w.
\end{eqnarray*}

Recalling that for our purposes $n=p+q,$ denote by $Y$  an element of the center ${\rm U}(1)$ of ${\rm K}=\mathrm  S(\mathrm U(p)\times \mathrm U(q))={\rm SU}(p)\times {\rm SU}(q)\times {\rm U}(1),$ the second equality holding only at the Lie algebra level:
\[
Y=\text{diag}
(\;
\underbrace{y^{\frac{1}{p}},\dots,y^{\frac{1}{p}}}_{p}, 
\underbrace{y^{-\frac{1}{q}}, \dots, y^{-\frac{1}{q}}}_{q}\; ). 
\] 
Hence, the action of $Y$  on $\check w$ is given by 

\begin{eqnarray}\label{actionofY}
Y\cdot\check w & = & 
\underbrace{y^{-\frac{1}{p}\sum_{j=1}^{p+q-1} k_{j}}
y^{-\frac{1}{p}\sum_{j=1}^{p+q-2} k_{j}} 
\cdots 
y^{-\frac{1}{p}(k_{q+1}+k_q+\dots+k_1)}
y^{-\frac{1}{p}(k_q+\dots+k_1)}}_p   \\
 & &\times \;
\underbrace{y^{\frac{1}{q}\sum_{j=1}^{q-1} k_{j}}
y^{\frac{1}{q}\sum_{j=1}^{q-2}k_j}\dots
y^{\frac1q (k_2+k_1)}
y^{\frac{1}{q}k_1} }_{q-1}\check w \nonumber \\
&=&
 y^{-\frac{p}{p}(k_q+\dots+k_1)-\frac{(p-1)k_{q+1}}{p}+\frac{(p-2)k_{q+2}}{p}-\dots - \frac{2 k_{p+q-2}}{p}-\frac{k_{p+q-1}}{p}} \nonumber \\
&  & \times\; 
y^{\frac{q-1}{q}k_1 + \frac{(q-2)k_2}{q} +\frac{(q-3)k_3}{q}+\dots
+\frac{2k_{q-2}}{q}+\frac{k_{q-1}}{q}}\check w \nonumber \\
&=& y^{-\frac{1}{q}k_1-\frac{2}{q}k_2-\dots - \frac{q-1}{q}k_{q-1}-k_q-\frac{1}{p}\sum_{i=1}^{p-1}(p-i)k_{q+i}}\check w \nonumber \\
&=& y^{-\frac{1}{q}\sum_{i=1}^q i k_i -\frac{1}{p}\sum_{j=1}^{p-1}k_{q+j}}\check w. \nonumber
\end{eqnarray}

In the present situation (compare with the setting in \cite{MaNa21}) $H^0(M;\mathcal O(k))=\mathbf{F}(k \varpi_q)$   
in virtue of the Bott--Borel--Weil Theorem \cite{Bott}. 
The moduli space of interest is  now a subset of   
$S^2(\mathbf{F}(k\varpi_q)),$ which was denoted by $\mathrm  H(W)^\perp $ in \S 2.
Since $S^2(\mathbf{F}(k\varpi_q))\subset \otimes^2 \mathbf{F}(k\varpi_q)$
we first apply  Littlewood--Richardson rules on Young tableaux    with $q$ rows and $k$ columns \cite{Ful}, to decompose $\otimes^2 \mathbf{F}(k\varpi_q)$ into irreducible representations under the action of $\mathrm{SU}(p+q).$
  We have that

\begin{lemma}
\begin{eqnarray}\label{ClebschGordan}
&& \mathbf{F}(k\varpi_q)  \otimes_{\mathbf C} \mathbf{F}(k\varpi_q) \\
 &=& 
\bigoplus_{i_1,i_2,\dots, i_q=0}^{k\geq \sum_{\alpha=1}^q i_\alpha}\mathbf{V}(2k-i_q,\;2k-(i_q+i_{q-1}),\;\dots, 2k-\sum_{\alpha=1}^q i_\alpha, 
\nonumber \\
& & \qquad \qquad \quad \sum_{\alpha=1}^q i_\alpha, \sum_{\alpha=2}^q i_\alpha, \dots, i_{q-1}+i_q, i_q, 0,\dots,0)  \nonumber \\
&=&
\bigoplus_{i_\alpha=0}^{k\geq \sum_{\alpha=1}^q i_\alpha }
\mathbf{F}(\underbrace{i_{q-1},i_{q-2},\dots, i_2, i_1}_{q-1},\; 2k-2\sum_{\alpha=1}^q i_\alpha, \;\underbrace{i_1, i_2, \dots, i_{q-1}, i_q}_{q},0,\dots,0) \nonumber
\end{eqnarray}

\end{lemma}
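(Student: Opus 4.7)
The approach is to apply the Littlewood--Richardson rule to the product of Schur functions $s_{k^q}\cdot s_{k^q},$ where the rectangular Young diagram $k^q$ (with $q$ rows of length $k$) corresponds to $\mathbf{F}(k\varpi_q).$ By the LR rule, the multiplicity of $s_\nu$ equals the number of Littlewood--Richardson skew tableaux of shape $\nu/k^q$ and content $k^q.$ Entries lie in $\{1,\ldots,q\},$ so column-strictness restricts $\nu$ to at most $2q$ nonzero parts; it furthermore forces $\nu_{q+1}\leq k,$ since the top-piece cell at $(q,k+1),$ when it exists, is pinned to the value $q$ by the analysis below, and so the cell $(q+1,k+1)$ in the bottom piece would otherwise be obliged to exceed $q.$ Consequently $\nu/k^q$ decomposes into two \emph{disjoint} straight sub-diagrams: a top piece of shape $(\nu_1-k,\ldots,\nu_q-k)$ sitting in rows $1,\ldots,q,$ and a bottom piece of shape $(\nu_{q+1},\ldots,\nu_{2q})$ sitting in rows $q+1,\ldots,2q.$

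The top piece must be filled with row $i$ populated entirely by $i$'s, by a standard argument: Yamanouchi on the reverse reading word forces the top-right cell to be $1,$ and column-strictness together with row-weak-increase then propagate the pattern downward. This contributes $\nu_i-k$ copies of $i,$ forcing the bottom-piece content to be $(2k-\nu_1,\ldots,2k-\nu_q).$ The key technical step is to show that the column-strict, row-weak and full-word Yamanouchi conditions (the last inheriting the cumulative counts $(\nu_1-k,\ldots,\nu_q-k)$ from the top) determine the bottom filling \emph{uniquely}, with its very existence governed by the symmetry relation $\nu_j+\nu_{2q+1-j}=2k$ for all $j.$ I would prove this by induction on the columns of the bottom piece: in each column the strictly increasing subset of $\{1,\ldots,q\}$ chosen as entries is forced by Yamanouchi together with the row-weak constraint propagating from the preceding column, and the symmetry relation emerges as the compatibility condition between the bottom shape and the prescribed content. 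This is the main obstacle, replacing and substantially generalising the analysis of Lemmas 4.1--4.4 of \cite{MaNa21}.

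Once this bijection between admissible $\nu$ and LR tableaux is established, the solutions can be parameterised by $(i_1,\ldots,i_q)$ with $i_\alpha=\nu_{q+\alpha}-\nu_{q+\alpha+1}$ for $\alpha<q$ and $i_q=\nu_{2q},$ yielding non-negative integers with $\sum_\alpha i_\alpha=\nu_{q+1}\leq k.$ Inverting gives $\nu_{q+j}=\sum_{\alpha\geq j}i_\alpha$ for $1\leq j\leq q,$ while the symmetry $\nu_j=2k-\nu_{2q+1-j}$ produces $\nu_j=2k-\sum_{\alpha\geq q+1-j}i_\alpha.$ This yields the first equality of \eqref{ClebschGordan}.

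The second equality follows by converting partition labels $\mathbf{V}(\lambda)$ to fundamental-weight labels $\mathbf{F}(\varpi)$ via \eqref{FtoV}, which amounts to taking successive differences $k_i=\lambda_i-\lambda_{i+1}.$ For the $\lambda$ just obtained these telescope cleanly to $(i_{q-1},\ldots,i_1,\,2k-2\sum_\alpha i_\alpha,\,i_1,\ldots,i_{q-1},i_q,0,\ldots,0),$ matching the stated formula and completing the proof.
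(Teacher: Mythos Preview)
Your approach and the paper's are essentially the same---both apply the Littlewood--Richardson rule to $s_{k^q}\cdot s_{k^q}$---but the bookkeeping differs. You work with LR skew tableaux and Yamanouchi words; the paper instead introduces variables $x^i_j$ recording how many boxes from row $i$ of the second rectangle land in row $j$ of the resulting diagram, writes the LR constraints as a system of inequalities and equalities among the $x^i_j$, and then squeezes pairs of opposing inequalities (one coming from the lattice-word condition, the other from the row-length rule) into equalities. Your decomposition into a forced ``top piece'' and a ``bottom piece'' is cleaner to state; the paper's variable-chasing is more mechanical but actually carries out the step you flag as the ``main obstacle.''

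That obstacle is the crux. You sketch an induction on columns for the uniqueness of the bottom filling and for the emergence of the symmetry relation $\nu_j+\nu_{2q+1-j}=2k$, but you do not execute it. The paper does execute the analogous step: it first proves $x^q_{2q}=x^{q-1}_{2q-1}=\cdots=x^1_{q+1}$ by combining a telescoped equality with the row-length rule, then iterates the same squeeze to show that every $x^\lambda_{q+\mu}$ coincides with one of the $q$ free parameters $x^q_{q+1},\ldots,x^q_{2q}$, and finally recovers $x^i_i=k-\sum_{r\geq q+1-i} x^q_{q+r}$, which is precisely your symmetry relation in their variables. So your plan is sound and would yield a correct proof, but as written it is incomplete exactly where the paper does the work; either carry out the column induction in full or adopt the paper's inequality-squeeze argument to close the gap.
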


\begin{rem}  Before entering into the details of the proof, let us introduce some
notation. 
Recall that by Eqn.(\ref{FtoV}) the weight $ k \varpi_q$ is 
$\lambda=(k,\dots,k,0,\dots,0)$ containing $q$ $k$'s, followed
by $p-1$ zeroes. To the representation $\mathbf{V}(\lambda)$ with this highest weight  it is associated a Young {\it tableau} \cite{Ful}, a diagram with $q$ rows, each with $k$ columns, or `boxes'. The tensor product $\mathbf{V}(\lambda)\otimes \mathbf{V}(\lambda)$ becomes $\oplus_{\mu}\mathbf{V}(\mu)$ where each of the admissible
$\mu$ is determined diagramatically
using the Littlewood--Richardson rules \cite{Littlewood}, which give a list of admissible rearrangements of the $2 q k$ boxes appearing in the Young {\it tableau} of the $\mathbf{V}(\lambda)$ factors of the tensor product. 
In order to keep track of  which boxes of the second diagram are added to each row of the first diagram we introduce variables $x^i_j,$ where  $1\leq i\leq q,$ and $1\leq j \leq 2q$ which stands for the number $x$ of boxes from the $i$-th row  of the second diagram which are to be attached to the right to  the $j$-th row of the first diagram: eg the quantity $k+x^1_5 + x^3_5$ represents the total number of boxes in row $5$ after $x^1_5$ boxes from the first row  of the second diagram and $x^3_5$ boxes from the third row  of the second diagram have been attached to the right of the $k$ boxes of the fifth row  in the first diagram.

With this notation, the relevant combinatorial Littlewood rules determining admissible arrangements can be stated as follows
\begin{eqnarray}
\label{lrr1} x^q_q\leq x^{q-1}_{q-1}\leq \dots\leq x^1_1 & \\
\label{lrr2} x^i_i+ x^i_{q+1}\leq x^{i-1}_{i-1}\qquad & (1\leq i\leq q)\\
\label{lrr3}x^i_i+\sum_{r=1}^s x^i_{q+r}\leq x^{i-1}_{i-1}+\sum_{r=1}^{s-1}x^i_{q+r} &  (2\leq s\leq i-1)\\
\label{lrr4}x^i_i+\sum_{r=1}^ix^i_{q+r}=x^{i-1}_{i-1}+\sum_{r=1}^{i-1}x^{i-1}_{q+r}=k & \\
\label{lrr5} \sum_{i=1}^q x^i_j \leq \sum_{i=1}^q x^i_{j-1} & (1\leq j \leq 2q)
\end{eqnarray} this last rule sometimes referred to as  the `row--length rule'.
\end{rem}

\begin{proof}
After the boxes of the right diagram have been added to the boxes of the left diagram, consider the number of boxes in the $q$-th row. 
The equation (\ref{lrr4}) follows from the total number of boxes in each row  before the rearrangement. Applying  (\ref{lrr4}) for $i=q$ (that is equal to consider the $q$-th and $q-1$-th rows) we obtain 
\begin{equation}\label{xqq} x^q_q=k-\sum_{r=1}^q x^q_{q+r},\qquad x^{q-1}_{q-1}=k-\sum_{r=1}^{q-1}x^{q-1}_{q+r}\end{equation}Substituting the values obtained for $x^q_q,\; x^{q-1}_{q-1}$ into (\ref{lrr3}) for $s=q-1,$ leads to
\[\left(k-\sum_{r=1}^q x^q_{q+r}\right)+\sum_{r=1}^{q-1}x^q_{q+r}\leq \left(k-\sum_{r=1}^{q-1}x^{q-1}_{q+r}\right)+\sum_{r=1}^{q-2}x^{q-1}_{q+r}\] after cancellation, this leads to
\[k-x^q_{2q}\leq k-x^{q-1}_{2q-1}\Longrightarrow x^q_{2q}\geq x^{q-1}_{2q-1}\]
By the row--length rule (\ref{lrr5}) it is implied that $x^q_{2q}\leq x^{q-1}_{2q-1}.$ Alltogether we are lead to
\begin{equation}x^{q-1}_{2q-1}=x^q_{2q}\end{equation}

Repeating this same argument now for the $(q-1)$-th row, that is fixing $i=q-1$ in (\ref{lrr4}), we have
\[x^{q-1}_{q-1}=k-\sum_{r=1}^{q-1}x^{q-1}_{q+r},\qquad x^{q-2}_{q-2}=k-\sum_{r=1}^{q-2}x^{q-2}_{q+r}\] and substituting back into
(\ref{lrr3}) for $s=q-2$ leads to
\[
\left(k-\sum_{r=1}^{q-1}x^{q-1}_{q+r}\right)+\sum_{r=1}^{q-2}x^{q-1}_{q+r}\leq \left(k-\sum_{r=1}^{q-2}x^{q-2}_{q+r}\right)+\sum_{r=1}^{q-3}x^{q-2}_{q+r}
\] which after cancellations simplifies to
\[k-x^{q-1}_{2q-1}\leq k-x^{q-2}_{2q-2}\Longrightarrow x^{q-1}_{2q-1}\geq x^{q-2}_{2q-2}\]
Since the row--length rule (\ref{lrr5}) implies that $x^{q-2}_{2q-2}\geq x^{q-1}_{2q-1}$ therefore
\begin{equation}
x^{q-2}_{2q-2}=x^{q-1}_{2q-1}.
\end{equation}
Steady iteration of the same argument leads to a series of relations similar to the previous ones
\begin{equation}
x^1_{q+1}=x^2_{q+2}=\dots=x^{q-2}_{2q-2}=x^{q-1}_{2q-1}=x^q_{2q}
\end{equation}
which allows to set $x^q_{2q}$ as a parameter for the arrangement of the other boxes.

Next, substituting the expressions (\ref{xqq}) into (\ref{lrr3}), this time fixing $s=q-2$ we obtain
\[\left(
k-\sum_{r=1}^q x^q_{q+r} \right)+\sum_{r=1}^{q-2}x^q_{q+r}\leq 
\left(
k-\sum_{r=1}^{q-1}x^{q-1}_{q+r}
\right)+\sum_{r=1}^{q-3}x^{q-1}_{q+r}\] that is
\[k-x^q_{2q-1}-x^q_{2q}\leq k-x^{q-1}_{2q}\leq k-x^{q-1}_{2q-1}-x^{q-1}_{2q-2}\] Using the previously deduced equality of $x^q_{2q}=x^{q-1}_{2q-1}$ and simplifying one gets $x^q_{2q-1}\geq x^{q-1}_{2q-2}.$ Then, using the row--length rule (\ref{lrr5}), together with the fact that $x^{q-2}_{2q-2}=x^{q-1}_{2q-1}$ we have that $x^{q-1}_{2q-2}\geq x^q_{2q-1},$ hence
\begin{equation}
x^{q-1}_{2q-2}=x^q_{2q-1}.
\end{equation}
Iteration of the argument as before leads to the identification of $x^q_{2q-1}$ as a new parameter through relations
\begin{equation}
x^2_{q+1}=x^3_{q+2}=\dots= x^{q-1}_{2q-2}=x^q_{2q-1}
\end{equation}
Orderly application of the same method each time for lower $s$ in (\ref{lrr3}) leads to a sequence of relatios in which $\{x^q_{q+r}\;:\;r=1,\dots,q\}$ become the parameters annd determine all other entries
\begin{equation}
x^\lambda_{q+\mu}=x^q_{q+r}\quad \text{if} \quad (q+\mu)-\lambda=r,\qquad 1\leq \lambda \leq \mu\leq q.
\end{equation}
The remaining variables $x^i_i,\; i=1,2,\dots,q$ are determined by the corresponding Littlewood rule (\ref{lrr4}), hence
\[x^1_1 = k-x^q_{2q},\qquad x^2_2= k-x^q_{2q-1}-x^q_{2q},\qquad \dots\]
etc., that is
\begin{equation}
x^i_i=k-\sum_{r=q+1-i}^q x^q_{q+r}
\end{equation}
The corresponding irreducible decomposition for $\mathbf{F}(k\varpi_q)\otimes_{\mathbf C} \mathbf{F}(k\varpi_q),$ that is, $\mathbf{V}(\lambda)\otimes_{\mathbf C} \mathbf{V}(\lambda)$ where
$\lambda=(k,\dots,k,0,\dots,0)$ will be 
$\oplus_{\lambda'} \mathbf{V}(\lambda')$ where $\lambda'$ is any of the admissible highest--weights
\[\lambda' =\Big(\underbrace{k+x^1_1,k+x^2_2,\dots,k+x^q_q,\sum_{r=1}^q x_{q+1}^r, \sum_{r=2}^q x^r_{q+2},\dots,x^q_{2q}}_{2q},0,\dots,0\Big).\]
To summarise, from our previous computation
\begin{eqnarray*}
\lambda'_i = 2k - \sum_{r=q+1-i}^q x^q_{q+r}\qquad (i=1,\dots,q)\\
\lambda'_{q+j} = \sum_{r=j}^q x^q_{q+r}\qquad (j=1,\dots, q)
\end{eqnarray*}
Finally, renaming the variables to define $i_{\alpha}=x^q_{q+\alpha}$ with $\alpha=1,2,\dots, q$ the statement of the Lemma follows.
\end{proof}

It will be convenient, to achieve greater simplicity in later expressions, to rearrange the variables so to have increasing indices in  Eqn. (\ref{ClebschGordan}). Hence, define $j_\alpha=i_{q-\alpha},\; \alpha=0,1,\dots, q-1$ such that
\begin{eqnarray*}
& &\mathbf{F}(i_{q-1},i_{q-2},\dots, i_1,2k-2\sum_{\alpha=1}^q i_\alpha, i_1, i_2,\dots, i_q,0,\dots, 0)\\
&=& \mathbf{F}(j_1,j_2,\dots, j_{q-1},2k-2\sum_{\alpha=0}^{q-1} j_\alpha, j_{q-1},j_{q-2},j_0,0,\dots,0)\\
&=&  {\mathbf F}\left(\sum_{i=1}^{q-1}j_i \varpi_i + \left(2k-2\sum_{\alpha=0}^{q-1} j_\alpha \right)\varpi_q +\sum_{i=1}^qj_{q-i}\varpi_{q+i}\right).
\end{eqnarray*}
\medskip

\begin{lemma}\label{lowestF}
The lowest weight of 
\[ \mathbf{F}\left(\sum_{i=1}^{q-1}j_i \varpi_i + \left(2k-2\sum_{\alpha=0}^{q-1} j_\alpha \right)\varpi_q +\sum_{i=1}^qj_{q-i}\varpi_{q+i}\right)\] is
\[ -2k+ \frac{p+q}{pq}\sum_{i=1}^{q-1}(q-i)j_i +\left(1+\frac{q}{p}\right)j_0\]
\end{lemma}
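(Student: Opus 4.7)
The strategy is to invoke Eqn~\eqref{actionofY}, which records for any irreducible $\mathbf F(\sum_i k_i\varpi_i)$ of $\mathrm{SU}(p+q)$ the exponent of $y$ with which the central element $Y\in\mathrm K$ acts on the lowest weight vector~$\check w$. Specialising that formula to the representation featured in the statement then reduces to reading off the~$k_i$'s and simplifying. Directly from the $\varpi$-decomposition displayed just before the Lemma, one has
\[
k_i=j_i\ (1\leq i\leq q-1),\qquad k_q=2k-2\sum_{\alpha=0}^{q-1}j_\alpha,\qquad k_{q+i}=j_{q-i}\ (1\leq i\leq q),
\]
and $k_{q+i}=0$ for $q<i\leq p-1$.

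Plugging this into \eqref{actionofY}, the first sum $-\frac{1}{q}\sum_{i=1}^{q}i\,k_i$ splits at $i=q$: the summand at $i=q$ absorbs $k_q$ and contributes the constant $-2k$ together with a uniform shift $+2\sum_{\alpha=0}^{q-1}j_\alpha$, while the $i<q$ terms contribute $-\frac{i}{q}j_i$ each. After the reindexing $\beta=q-j$ the second sum becomes $-\frac{1}{p}\sum_{\beta=0}^{q-1}(p-q+\beta)j_\beta$. Separating the $j_0$ coefficient yields $2-(p-q)/p=(p+q)/p=1+q/p$, matching the announced constant in front of $j_0$. For $1\leq i\leq q-1$ the coefficient of $j_i$ is $2-i/q-(p-q+i)/p$; clearing denominators transforms this into $(2pq-ip-q(p-q+i))/(pq)=(p+q)(q-i)/(pq)$, which matches the stated coefficient. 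Together with the residual~$-2k$ this completes the identification.

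The part needing most attention is the index bookkeeping: the definition $j_\alpha=i_{q-\alpha}$ induces the reverse correspondence $k_{q+i}=j_{q-i}$, and the sum over $k_{q+j}$ in \eqref{actionofY}, nominally extending to $p-1$, collapses to the range $1\leq j\leq q$ because $k_{q+j}=0$ for $j>q$. Once these two observations are recorded, no further representation--theoretic input beyond \eqref{actionofY} is needed; everything else is arithmetic.
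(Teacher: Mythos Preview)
Your proof is correct and follows essentially the same route as the paper's own argument: both invoke the formula~\eqref{actionofY} for the action of $Y$ on the lowest weight vector, read off the coefficients $k_i$ from the $\varpi$-expansion, and simplify the coefficient of $j_i$ via $2-\tfrac{i}{q}-\tfrac{p-q+i}{p}=\tfrac{(p+q)(q-i)}{pq}$. Your explicit remarks on the reindexing $\beta=q-j$ and the collapse of the second sum to the range $1\le j\le q$ are exactly the bookkeeping points the paper carries out more implicitly.
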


\begin{proof}
If $\check w$ is the lowest  weight vector in the aforesaid $\mathrm{SU}(p+q)$ representation, then following (\ref{actionofY})
\begin{eqnarray*}
Y\cdot \check w & = & y^{-\frac1q \sum_{i=1}^q i k_i-\frac1p \sum_{j=1}^{p-1}(p-j)	k_{q+j}}\check w\\
&=& 
y^{-\frac1q\sum_{\alpha=1}^{q-1}\alpha j_\alpha - (2k-2 \sum_{\alpha=0}^{q-1}j_\alpha) -\frac1p \sum_{\alpha=1}^{q-1}(p-\alpha)j_{q-\alpha}-\frac{p-q}{p}j_0}\check w\\
&=& 	y^{-2k-\frac1q\sum_{\alpha=1}^{q-1}\alpha j_\alpha + 2\sum_{\alpha=0}^{q-1} j_\alpha-\frac1p\sum_{l=1}^{q-1}(p-(q-l)) j_l - \frac{p-q}{p}j_0}\check w\\
&=& y^{-2k - \sum_{i=1}^{q-1}\left(\frac{i}{q}-2+\frac{p-q+i}{p}\right)j_i+\left(1+\frac{q}{p}\right)j_0} \check w. 
\end{eqnarray*}
We can see that
\begin{eqnarray*}
& &\sum_{i=1}^{q-1}\left(\frac{i}{q}-2+\frac{p-q+i}{p}\right)j_i = \sum_{i=1}^{q-1}\left(\frac{i}{q}-1+\frac{-q+i}{p}\right)j_i\\
&=&
\sum_{i=1}^{q-1	}\left(\frac{pi-pq+q(-q+i)}{pq}\right)j_i
=
\sum_{i=1}^{q-1}\left(\frac{(p+q)i-q(p+q)}{pq}\right)j_i\\
&=&\frac{p+q}{pq}\sum_{i=1}^{q-1}(i-q)j_i.
\end{eqnarray*}
Hece, 
\[Y\cdot\check w = y^{-2k+\frac{p+q}{pq}\sum_{i=1}^{q-1}(q-i)j_i+\left(1+\frac{q}{p}\right)j_0}\check w.\]
\end{proof}

In order to describe explicitly the moduli up to gauge equivalence of maps of holomorphic isometric embeddings
$\mathrm{Gr}_p(\mathbf{C}^{p+q})\to \mathcal Q_N=\mathrm{Gr}_N(W)$  
of degree $k,$ with $p\geq q,$ the generalisation of do Carmo--Wallach theory,  Theorems \ref{GenDW} and  \ref{quadmodcpx},    requires to determine  $\G\mathrm{S}(\mathfrak{m}V_0,V_0)\cap \mathrm  H(W)^\perp $.

 The universal quotient bundle  $Q\to \mathrm{Gr}_p(\mathbf C^{p+q})$ of the $p$--plane Grassmannian  is of rank $q,$ and therefore is related to the holomorphic line bundle $\mathcal O(1)\to \mathrm{Gr}_p(\mathbf C^{p+q})$  
by $\Lambda^q Q \cong \mathcal O(1).$ At the reference point $o\in\mathrm{Gr}_p(\mathbf C^{p+q}),$ the center $\mathrm U(1)$ of the isotropy subgroup $\mathrm S(\mathrm U(p)\times\mathrm U(q)),$ generated by $Y,$ acts on the fibre $\mathcal O(1)_o$ with  weight $-1.$ We will denote this standard
fibre by $\mathbf C_{-1}.$ Analogously, $\mathbf{C}_k$ will stand for the  irreducible representation of $\mathrm U(1)$ with highest weight $k.$  
With this notation the homogeneous bundle $\text{SU}(p+q)\times_{\mathrm  S(\mathrm U(p)\times \mathrm U(q))} \mathbf{C}_{-k}$ is 
the complex line bundle $\mathcal O(k) \to  \mathrm{Gr}_p(\mathbf{C}^{p+q}),\;p\geq q,$ of degree $k$. 
Notice that, by  Theorem \ref{GenDW}, $W$ is identified with $H^0(\mathrm{Gr}_p(\mathbf{C}^{p+q}),\mathcal O(k)).$

\begin{lemma}\label{mV0}
\[\mathfrak m V_0 = \mathbf{C}^p\otimes \mathbf{C}^{q*}\otimes \mathbf{C}_{-k+\frac1p+\frac1q}.\]
\end{lemma}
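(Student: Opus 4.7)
The plan is to describe $\mathfrak m$ explicitly as a $\K$--module, compute the weight with which the central element $Y$ acts on it, and combine with the already known weight of $V_0$.

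First I would unpack $\mathfrak m$ via the Cartan decomposition $\su(p+q)=\mathfrak k\oplus\mathfrak m$ with $\mathfrak k=\mathfrak s(\mathfrak u(p)\oplus\mathfrak u(q))$. In block form, $\mathfrak m$ consists of matrices
\[
\begin{pmatrix} 0 & B \\ -B^{*} & 0\end{pmatrix},\qquad B\in M_{p\times q}(\C),
\]
and, endowing it with the $\K$--invariant complex structure coming from the Hermitian symmetric structure on $\mathrm{Gr}_p(\C^{p+q})$, this space is identified as a complex $\K$--representation with $\mathrm{Hom}(\C^{q},\C^{p})\cong \C^{p}\otimes \C^{q*}$ via $B\leftrightarrow B$.

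Next I would compute the action of the central element $Y=\mathrm{diag}(y^{1/p}I_{p},\,y^{-1/q}I_{q})$. Direct conjugation $B\mapsto ABD^{-1}$ with $A=y^{1/p}I_p$, $D=y^{-1/q}I_q$ yields $B\mapsto y^{1/p+1/q}B$, so $Y$ acts on $\C^{p}\otimes \C^{q*}$ with weight $\tfrac{1}{p}+\tfrac{1}{q}$. Equivalently, the centre acts with weight $\tfrac{1}{p}$ on $\C^{p}$ and with weight $\tfrac{1}{q}$ on the dual factor $\C^{q*}$, and the weights add under the tensor product.

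Finally, since $V_0=\C_{-k}$ by the identification $\mathcal O(k)=\SU(p+q)\times_{\K}\C_{-k}$ recalled just before the statement, tensoring gives
\[
\mathfrak m\, V_0 \;=\; (\C^{p}\otimes \C^{q*})\otimes \C_{-k+\frac{1}{p}+\frac{1}{q}},
\]
as claimed. The only point requiring any care is identifying $\mathfrak m$ with its $(1,0)$--part rather than its full complexification; this is implicit in the holomorphic setting in which the paper is developed (holomorphic sections, the complex structure on $W$, etc.), and matches the usage already made of $\mathfrak m V_0$ in condition (c) of Theorem \ref{GenDW}.
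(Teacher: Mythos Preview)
Your proof is correct and follows essentially the same line as the paper: identify the relevant $(1,0)$--piece of $\mathfrak m$ as a $\K$--module, read off the weight of the central $\mathrm{U}(1)$, and tensor with $V_0=\mathbf C_{-k}$. The only cosmetic difference is that the paper phrases the identification via the bundle description $T=S^\ast\otimes Q$ (so that the fibre of $T^\ast$ is $\mathbf C^p\otimes{\mathbf C^q}^\ast\otimes\mathbf C_{1/p+1/q}$), whereas you compute the same weight directly by conjugating the off--diagonal block $B$ by $Y$; both routes yield $\mathfrak m V_0=\mathbf C^p\otimes{\mathbf C^q}^\ast\otimes\mathbf C_{-k+1/p+1/q}$.
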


\begin{proof}
If $\mathbf{C}^{p}=\mathbf{V}(1,0,\dots,0)$ denotes the standard representation of $\text{U}(p+q),$  
the tautological vector bundle $S\to \mathrm{Gr}_{p}(\mathbf{C}^{p+q})$ can be identified with the homogeneous bundle ${\rm SU}(p+q)\times_{\mathrm  S(\mathrm U(p)\times \mathrm U(q))}\mathbf{C}^p.$  The universal quotient bundle $Q\to \mathrm{Gr}_p\mathbf{C}^{p+q}$ is defined in the standard way, by the exact
sequence $0\to S\to \underline{\mathbf{C}^{p+q}}\to Q\to 0.$ Altogether, they define the holomorphic tangent bundle, $T\to \mathrm{Gr}_p(\mathbf{C}^{p+q}),\;$ $T=S^*\otimes Q,$ which is  a homogeneous bundle with standard fibre
\[ \left({\mathbf{C}^p} \otimes \mathbf{C}_{\frac{1}{p}}\right) ^{\ast} \otimes 
\left(\mathbf{C}^q \otimes 
\mathbf{C}_{-\frac{1}{q}}\right)
={\mathbf{C}^p}^{\ast} \otimes 
\mathbf{C}^q \otimes \mathbf{C}_{-\frac{1}{p}-\frac{1}{q}}.
\]
Analogously, the standard fibre of the cotangent bundle $T^*\to \mathrm{Gr}_p(\mathbf{C}^{p+q})$ is given by
\[T^{\ast}
={\mathbf{C}^p}\otimes 
{\mathbf{C}^q}^{\ast}\otimes \mathbf{C}_{\frac{1}{p}+\frac{1}{q}}.
\]

Now,  $T ^c\cong \mathfrak m^c,$ and $V_0$ is spanned by
the lowest--weight vector 	of $H^0(\mathrm{Gr}_p(\mathbf{C}^{p+q}),\mathcal O(k)),$ then
\[T^c\otimes V_0=
{\mathbf{C}^p}\otimes 
{\mathbf{C}^q}^{\ast}\otimes \mathbf{C}_{\frac{1}{p}+\frac{1}{q}}
\otimes \mathbf{C}_{-k}
={\mathbf{C}^p}\otimes 
{\mathbf{C}^q}^{\ast}\otimes \mathbf{C}_{-k+\frac{1}{p}+\frac{1}{q}}.
\]
\end{proof}

\begin{lemma}\label{intersection}
\[{\rm GS}(\mathfrak m V_0,V_0)\cap  \mathrm  H(W)^\perp=\mathbf{F}(2k\varpi_q).\]
\end{lemma}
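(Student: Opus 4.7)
My plan is to combine Frobenius reciprocity with the $Y$-weight analysis of Lemma \ref{lowestF}. According to the general characterisation developed in \cite{MaNa21}, \S 2, $\mathrm{GS}(\mathfrak{m}V_0,V_0)$ is the direct sum of those irreducible $\mathrm{SU}(p+q)$-submodules $U\subset W\otimes W$ whose $\mathrm K$-restriction contains the $\mathrm K$-type $\mathfrak{m}V_0\otimes V_0\cong \mathbf{C}^p\otimes\mathbf{C}^{q\ast}\otimes\mathbf{C}_{-2k+\frac{1}{p}+\frac{1}{q}}$ provided by Lemma \ref{mV0}. A necessary condition for $U=\mathbf{F}(\cdot)$ to contribute is that its lowest $Y$-weight be at most $-2k+\frac{1}{p}+\frac{1}{q}$; substituting the formula of Lemma \ref{lowestF} this translates into
\[\sum_{i=1}^{q-1}(q-i)j_i + qj_0 \leq 1.\]
For $q\geq 2$ the only solutions of this inequality in non-negative integers are $(j_0,\ldots,j_{q-1})=(0,\ldots,0)$, yielding the Cartan component $U_C=\mathbf{F}(2k\varpi_q)$, and $(j_0,j_1,\ldots,j_{q-1})=(0,\ldots,0,1)$, yielding $U_E=\mathbf{F}(\varpi_{q-1}+(2k-2)\varpi_q+\varpi_{q+1})$.

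I would then verify that each candidate actually realises the full $\mathrm K$-type and not merely its $Y$-weight. For $U_E$ this is essentially by construction: the $\mathrm G$-lowest weight vector of $U_E$ sits in the $Y$-weight $-2k+\frac{1}{p}+\frac{1}{q}$ space, and passing it to the $\mathrm K$-dominant chamber via the longest Weyl element of $\mathrm K$ identifies its weight with that of $\mathbf{C}^p\otimes\mathbf{C}^{q\ast}\otimes\mathbf{C}_{-2k+\frac{1}{p}+\frac{1}{q}}$. For $U_C$ the same $\mathrm K$-type appears one weight above the lowest, produced by applying $\mathfrak{m}$ to $\check{w}\otimes\check{w}$, where $\check w$ is the lowest weight vector of $W$.

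The crux of the proof is then to decide which of $U_C,U_E$ lies in $S^2W$ and which in $\wedge^2W$; since $\mathrm H(W)^\perp$, upon complexification, corresponds to the $S^2W\oplus S^2\overline{W}$ summand of $S^2_{\mathbf{R}}(W)\otimes\mathbf{C}$, only $\mathrm{SU}(p+q)$-submodules lying inside $S^2W$ can contribute to $\mathrm{GS}(\mathfrak{m}V_0,V_0)\cap\mathrm H(W)^\perp$. The argument is by isotypic counting in $W\otimes W$: the $Y$-weight $-2k+\frac{1}{p}+\frac{1}{q}$ subspace equals $V_0\otimes\mathfrak{m}V_0\oplus\mathfrak{m}V_0\otimes V_0$, i.e.\ two copies of $\mathbf{C}^p\otimes\mathbf{C}^{q\ast}$ exchanged with sign by the swap $\tau$, so exactly one copy sits in $S^2W$ and the other in $\wedge^2W$. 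The Cartan component $U_C$ has highest weight vector $\hat{w}\otimes\hat{w}$ (manifestly in $S^2W$) and hence accounts for the symmetric copy; by exhaustion $U_E$ must contain the antisymmetric copy, so $U_E\subset\wedge^2 W$.

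Assembling these ingredients yields $\mathrm{GS}(\mathfrak{m}V_0,V_0)\cap\mathrm H(W)^\perp = U_C = \mathbf{F}(2k\varpi_q)$. The main obstacle is the candidate enumeration of the first step: keeping precise track of the mixed rational coefficients in Lemma \ref{lowestF} so that the Diophantine inequality collapses to exactly two admissible highest weights. Once this is carried out, the $S^2$--$\wedge^2$ disentanglement follows automatically from the isotypic count of $Y$-weight spaces in $W\otimes W$.
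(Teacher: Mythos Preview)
Your argument is correct and follows the same route as the paper: compute the $Y$-weight of $\mathfrak{m}V_0\otimes V_0$ via Lemma~\ref{mV0}, compare it with Lemma~\ref{lowestF} to reduce the Diophantine inequality to the two candidates $U_C=\mathbf{F}(2k\varpi_q)$ and $U_E=\mathbf{F}(\varpi_{q-1}+(2k-2)\varpi_q+\varpi_{q+1})$, and then separate symmetric from antisymmetric. (Your label for $U_E$ is in fact the correct one; the paper drops the $\varpi_{q-1}$ summand in its display, which is a typographical slip.)

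The only substantive difference is the last step. The paper shows $U_E\subset\wedge^2W$ \emph{directly}, by exhibiting the explicit highest weight vector $\hat{w}\wedge\hat{w}_-\in\wedge^2W$ and checking its weight. You instead argue by exhaustion: the relevant $\mathrm K$-type occurs with total multiplicity two in $W\otimes W$ (one symmetric, one antisymmetric), and since $U_C$ visibly accounts for the symmetric copy via $\hat w\otimes\hat w$, $U_E$ must carry the antisymmetric one. This is fine, but it leans on the assertion that the $Y$-weight $-2k+\tfrac{1}{p}+\tfrac{1}{q}$ subspace of $W\otimes W$ is exactly $V_0\otimes\mathfrak{m}V_0\oplus\mathfrak{m}V_0\otimes V_0$, i.e.\ that $V_0$ and $\mathfrak{m}V_0$ are the \emph{only} $\mathrm K$-types of $W$ at $Y$-weights $-k$ and $-k+\tfrac{1}{p}+\tfrac{1}{q}$ respectively. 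That is true here (the $\mathrm K$-spectrum of $H^0(\mathcal O(k))$ is simple at the bottom two levels because $V_0$ is one-dimensional), but it is an extra fact you are invoking implicitly. The paper's highest-weight-vector argument avoids this detour and is marginally cleaner; your counting argument, on the other hand, makes transparent \emph{why} exactly one of the two candidates must land in each symmetry type.
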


\begin{proof} 
As a consequence of the previous Lemma \ref{mV0}, the lowest--weight of $\mathrm{GS}(\mathfrak{m} V_0,V_0)$  is
\[
-k-k+\frac{1}{p}+\frac{1}{q}=-2k+\frac{1}{p}+\frac{1}{q}. 
\]
On the other hand, by Lemma \ref{lowestF}, the lowest--weight of
\[ F\left(\sum_{i=1}^{q-1}j_i \varpi_i + \left(2k-2\sum_{\alpha=0}^{q-1} j_\alpha \right)\varpi_q +\sum_{i=1}^qj_{q-i}\varpi_{q+i}\right)\] is lower than the lowest--weight of $\mathrm{GS}(\mathfrak m V_0,V_0)$ in $\mathrm S^2(\mathbf{F}(k \varpi_q))$ if and only if
\[ -2k+ \frac{p+q}{pq}\sum_{i=1}^{q-1}(q-i)j_i +\left(1+\frac{q}{p}\right)j_0 \leq -2k +\frac1p +\frac1q, \]
or, equivalently, if and only if
\[ \frac{p+q}{pq}\left(-1+\sum_{i=1}^{q-1}(q-i)j_i \right)+\left(1+\frac{q}{p}\right)j_0\leq0,\]
if and only if
\[-1+\sum_{i=1}^{q-1}(q-i)j_i  \leq 0,\quad j_0=0,\]
that is
\[(q-1)j_1+(q-2)j_2+\dots+2 j_{q-2}+j_{q-1}\leq 1,\quad j_0=0.\]
It follows that $j_{q-1}=\{0,1\}$ and all the other terms vanish. 
Therefore,
\[\mathrm{GS}(\mathfrak m V_0,V_0)\subseteq  \underbrace{\mathbf{F}(2k\varpi_q)}_{j_{q-1}=0}\oplus \underbrace{ \mathbf{F}((2k-2)\varpi_q+\varpi_{q+1})}_{j_{q-1}=1}.\]
However, since 
\begin{equation}\label{sym}\mathbf{F}(2k \varpi_q)\subset \mathrm S^2(\mathbf{F}(k\varpi_q))\end{equation}
while 
\begin{equation}\label{skew}\mathbf{F}((2k-2)\varpi_q+\varpi_{q+1})\subset\Lambda^2(\mathbf{F}(k\varpi_q))\end{equation}
we have that
\[\mathrm{GS}(\mathfrak m V_0,V_0)\cap\mathrm{H}(W)^\perp=\mathbf{F}(2k \varpi_q).\]

The inclusions \ref{sym} and \ref{skew} are determined as follows: Let $\hat w$ denote as usual the highest--weight vector of $\mathbf{F}(k\varpi_q),$ and denote by $\hat w_-$ the vector in $\mathbf{F}(k\varpi_q)$
with weight just below that of $\hat w.$ It is clear that $\hat w\otimes \hat w$ is contained in $\mathrm{S}^2 \mathbf{F}(k\varpi_q)$ and that it is the highest--weight vector in the irreducible component   $\mathbf{F}(2k\varpi_q).$ Therefore, the inclusion \ref{sym} follows. Analogously, $\hat w \wedge \hat w_-$ belongs
to $\wedge^2(\mathbf{F}(k\varpi_q)),$ and it is the highest--weight vector in the
irreducible component $\mathbf{F}((2k-2)\varpi_q+\varpi_{q+1}),$ implying the inclusion \ref{skew}, and completing the proof.
\end{proof}

\begin{cor}\label{intersection 2} 
The orthogonal complement to 
$\G\mathrm{S}(\mathfrak{m}V_0,V_0) \oplus \mathbf{R}\,Id$
in $\mathrm{Sym}(W)$ is 
\[
V_k=
\mathrm S^2(\mathbf{F}(k\varpi_q))-\mathbf{F}(2k\varpi_q)
\]
\end{cor}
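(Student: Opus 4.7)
The plan is to obtain the corollary directly as a repackaging of Lemma~\ref{intersection}, using the $\SU(p+q)$-equivariant decomposition of the ambient space $\mathrm{Sym}(W)$. First, I would exploit the natural orthogonal splitting
\[
\mathrm{Sym}(W) \;=\; \mathrm{H}(W) \oplus \mathrm{H}(W)^{\perp},
\]
where $\mathrm{H}(W)$ denotes the Hermitian (complex-linear, self-adjoint) endomorphisms of $W=\mathbf{F}(k\varpi_q)$, and $\mathrm{H}(W)^{\perp}$ the conjugate-linear symmetric endomorphisms. As an $\SU(p+q)$-module, $\mathrm{H}(W)^{\perp}$ is identified with the complex symmetric square $\mathrm{S}^{2}(\mathbf{F}(k\varpi_q))$ via $T\mapsto \langle T\,\cdot\,,\,\cdot\,\rangle$. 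The subspace $\mathbf{R}\,Id$ sits inside $\mathrm{H}(W)$ as the (unique) trivial $\SU(p+q)$-isotypic component of $\mathrm{Sym}(W)$ and is therefore automatically orthogonal to $\mathrm{H}(W)^{\perp}$.

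Second, I would project $\G\mathrm{S}(\mathfrak m V_0, V_0)$ onto each summand. Its $\mathrm{H}(W)^{\perp}$-component is precisely the irreducible $\mathbf{F}(2k\varpi_q)$ by Lemma~\ref{intersection}. Its $\mathrm{H}(W)$-component is identified, via the general do Carmo--Wallach description of $\G\mathrm{S}(\mathfrak m V_0, V_0)$ recalled in Section~2 (compare \cite{MaNa21}, $\S$2.5, and \cite{Na13}), with the traceless Hermitian subspace $\mathrm{H}(W)\ominus \mathbf{R}\,Id$. Combining these two observations,
\[
\G\mathrm{S}(\mathfrak m V_0, V_0)\oplus \mathbf{R}\,Id \;=\; \mathrm{H}(W)\oplus \mathbf{F}(2k\varpi_q),
\]
and its orthogonal complement in $\mathrm{Sym}(W)$ equals
\[
\mathrm{H}(W)^{\perp}\ominus \mathbf{F}(2k\varpi_q) \;\cong\; \mathrm{S}^{2}(\mathbf{F}(k\varpi_q)) - \mathbf{F}(2k\varpi_q) \;=\; V_k,
\]
which is the claim.

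The most delicate point is justifying that the Hermitian part of $\G\mathrm{S}(\mathfrak m V_0, V_0)$ exhausts $\mathrm{H}(W)\ominus \mathbf{R}\,Id$ and contains no competing trivial summand. This is controlled by a Schur's-lemma argument that identifies the unique trivial isotypic component of $\mathrm{Sym}(W)$ with $\mathbf{R}\,Id$, so that no extra constants can leak into $\G\mathrm{S}(\mathfrak m V_0, V_0)$; once this is settled, the remaining content of the proof is the mechanical combination of the splittings above with Lemma~\ref{intersection}.
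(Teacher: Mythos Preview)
Your approach is essentially the paper's: the corollary carries no separate proof and is meant to be read off from Lemma~\ref{intersection} once one has the decomposition $\mathrm{Sym}(W)=\mathrm H(W)\oplus\mathrm H(W)^\perp$ with $\mathrm H(W)^\perp\cong\mathrm S^2(\mathbf F(k\varpi_q))$, exactly as you lay out.

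The one divergence is in how the Hermitian block is disposed of. You try to absorb $\mathrm H(W)\ominus\mathbf R\,Id$ into $\G\mathrm S(\mathfrak m V_0,V_0)$ directly, whereas in the paper's logic the Hermitian summand is eliminated earlier, by Theorem~\ref{quadmodcpx}: once the target is a quadric and the embedding is holomorphic, the moduli already lives in $\mathrm H(W)^\perp$, so the corollary reduces to taking the complement of $\mathbf F(2k\varpi_q)$ inside $\mathrm S^2(\mathbf F(k\varpi_q))$ --- which is Lemma~\ref{intersection} verbatim. Your route instead needs the stronger assertion that $\G\mathrm S(\mathfrak m V_0,V_0)\cap\mathrm H(W)=\mathrm H(W)\ominus\mathbf R\,Id$. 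The Schur's-lemma argument you sketch does not deliver this: it only locates the unique trivial isotype and shows $\mathbf R\,Id\not\subset\G\mathrm S(\mathfrak m V_0,V_0)$, not that the latter exhausts the traceless Hermitian endomorphisms. Nothing in \S2 of the present paper establishes that equality either. If you want to keep your formulation you must cite the specific place in \cite{Na13} or \cite{MaNa21}, \S2.5, where the Hermitian component of $\G\mathrm S(\mathfrak m V_0,V_0)$ is identified; otherwise, route the argument through Theorem~\ref{quadmodcpx} as the paper does and the corollary becomes a one-line restatement of Lemma~\ref{intersection}.
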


\begin{rem}
Although hook--length formulae for the dimension $\mathbf d_{p+q}(\lambda)$ of each $\mathrm{SU}(p+q)$--representation
$\mathbf{V}(\lambda)$  contributing for the moduli exist, the explicit expression is too cumbersome to be useful in itself. The dimension of the corresponding moduli is more easily computed simply
as $\dim V_k = \dim \mathrm S^2 \mathbf{F}(k \varpi_q) -  \dim \mathbf{F}(2k \varpi_q);$ explicitly
\begin{equation}\label{dimension}\dim V_k = \frac12 \mathbf d_{p+q}(k\varpi_q) (\mathbf{d}_{p+q}(k\varpi_q)+1) - \mathbf{d}_{p+q}(2k\varpi_q)\end{equation}
where the needed hook--rule formula is given by
\[\mathbf{d}_{a}(b\varpi_c)=\prod_{i=1}^{c}\prod_{j=1}^{b}\frac{a-i+j}{1+(c-i)+(b-j)}.\]
\end{rem}

As a consequence of the previous Lemmas and Corollary, Theorem \ref{GenDW} allows to give a clear picture of the desired moduli space, information which is summarised in the next Theorem, and which generalises to arbitrary Grassmannian manifolds the main result in \cite{MaNa21}:

\medskip

\begin{thm}\label{gmod1} 
 If $f:\mathrm{Gr}_p(\mathbf{C}^{p+q}) \to \mathrm{Gr}_n (\R^{n+2})$ is a full holomorphic isometric embedding of degree $k,$ then 
$n+2 \leq \dim V_k$.

Let $\mathcal M_k$ be the moduli space of full holomorphic isometric embeddings of degree $k$
  of $\mathrm{Gr}_p(\mathbf{C}^{p+q})$ into $\mathrm{Gr}_{N}(\mathbf{R}^{N+2})$
 by the gauge equivalence of maps, 
where $N+2=\dim V_k. $
Then, $\mathcal M_k$ can be regarded as an open bounded convex body 
in $V_k$.

Let $\overline{\mathcal M_k}$ be the closure of the moduli $\mathcal M_k$ by  the
topology induced from 
the inner product. 
Every boundary point of $\overline{\mathcal M_k}$ 
distinguishes a subspace $\mathbf{R}^{h+2}$ of $\mathbf{R}^{N+2}$ and 
describes a full holomorphic isometric embedding into $\mathrm{Gr}_h(\mathbf{R}^{h+2})$ 
which can be regarded as totally geodesic submanifold of 
$\mathrm{Gr}_{N}(\mathbf{R}^{N+2})$. 
The inner product on $\mathbf{R}^{N+2}$ determines 
the orthogonal decomposition of $\mathbf{R}^{N+2}:$ 
$\mathbf{R}^{N+2}=\mathbf{R}^{h+2}\oplus (\mathbf{R}^{{h+2}})^{\bot}$. 
Then the totally geodesic submanifold $\mathrm{Gr}_h(\mathbf{R}^{h+2})$ 
can be obtained as the common zero set of sections of 
$Q\to \mathrm{Gr}_{N}(\mathbf{R}^{N+2})$,
which belongs to $(\mathbf{R}^{{h+2}})^{\bot}$.  
\end{thm}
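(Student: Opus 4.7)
The proof is a direct application of the generalised do Carmo--Wallach theory (Theorem \ref{GenDW}), specialised to the quadric setting by Theorem \ref{quadmodcpx} and the representation-theoretic reduction of Lemma \ref{intersection} and Corollary \ref{intersection 2}. By Theorem \ref{GenDW}, any full holomorphic isometric embedding $f$ of degree $k$ satisfying the gauge condition for $(\mathcal{O}(k),h_k)$ is determined, up to gauge equivalence, by a positive semi-definite symmetric endomorphism $T$ of $W$ with $\mathbf{R}^{n+2}=(\mathrm{Ker}\,T)^\perp$ and $n+2=\mathrm{rank}(T)$. Writing $A=T^2-Id_W$, the two orthogonality relations of (c) together with the Hermitian constraint from Theorem \ref{quadmodcpx} localise $A$ in the representation $V_k$ identified in Corollary \ref{intersection 2}. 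Since the assignment $f\leftrightarrow A$ is injective, this gives the dimension bound $n+2\leq\dim V_k$, with the maximal value $N+2=\dim V_k$ realised when $T$ has trivial kernel.

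For the open bounded convex body description, fix $N+2=\dim V_k$; the moduli $\mathcal{M}_k$ is identified with the set of $A\in V_k$ such that $Id_W+A$ is strictly positive definite on $W$. This subset is open because strict positive-definiteness is an open condition, and convex because $T^2=Id_W+A$ depends affinely on $A$, while strict positive-definiteness of a symmetric operator is preserved under convex combinations. Boundedness is obtained from the trace identity $\mathrm{tr}(A)=0$ (implied by $A\perp Id_W\in\mathrm{GS}(V_0,V_0)$) combined with $\mathrm{tr}(T^2)=\dim W$, which confines the eigenvalues of $T$, and hence of $A$, to a compact region.

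A boundary point of $\overline{\mathcal{M}_k}$ corresponds to $Id_W+A$ being only positive semi-definite, so that $T$ acquires a non-trivial kernel of some dimension $N-h$. Setting $\mathbf{R}^{h+2}=(\mathrm{Ker}\,T)^\perp\subset\mathbf{R}^{N+2}$, the converse part of Theorem \ref{GenDW} produces a full holomorphic isometric embedding into $\mathrm{Gr}_h(\mathbf{R}^{h+2})$, and its ``moreover'' clause provides the unique holomorphic totally geodesic embedding $\mathrm{Gr}_h(\mathbf{R}^{h+2})\hookrightarrow\mathrm{Gr}_N(\mathbf{R}^{N+2})$ compatible with the linear inclusion of subspaces. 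For the zero-set description, the tautological sequence $0\to S\to\underline{\mathbf{R}^{N+2}}\to Q\to 0$ shows that each $v\in\mathbf{R}^{N+2}$ determines a section $\tilde v\in\Gamma(Q)$ whose zero locus consists of the oriented $N$-planes $\pi$ containing $v$ in their orthogonal $\pi^\perp$. Letting $v$ range over a basis of $(\mathbf{R}^{h+2})^\perp$, the common zero set is exactly the locus of $\pi$ with $\pi^\perp\subset(\mathbf{R}^{h+2})^\perp$, which coincides with $\mathrm{Gr}_h(\mathbf{R}^{h+2})$.

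The main technical subtlety is the convexity verification, which the nonlinear correspondence $T\mapsto A=T^2-Id_W$ seems to obstruct. This is resolved by parameterising everything through $T^2$ rather than $T$: since $A\mapsto T^2=Id_W+A$ is affine and positive-definiteness is a convex open condition, convexity and openness of $\mathcal{M}_k$ follow at once, while the description of boundary degenerations reduces to the standard geometry of semi-definite matrices.
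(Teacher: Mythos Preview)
Your overall strategy matches the paper's: invoke Theorem~\ref{GenDW} to parametrise gauge classes by $T$ (equivalently $A=T^2-Id_W$), use condition~(c) together with Theorem~\ref{quadmodcpx} and Corollary~\ref{intersection 2} to land $A$ in $V_k$, and read off the boundary description from degeneration of $T$ to a semi-definite operator. Your treatment of openness, convexity, boundedness, and the zero-locus description of the totally geodesic sub-quadric is more explicit than the paper's (which simply cites Theorem~\ref{quadmodcpx} and prior work) and is essentially correct.

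There is, however, a genuine gap in your derivation of the bound $n+2\le\dim V_k$ (equivalently $n\le N$). You write that ``since the assignment $f\leftrightarrow A$ is injective, this gives the dimension bound $n+2\le\dim V_k$''. But injectivity of $f\mapsto A$ only bounds the dimension of the \emph{moduli space} by $\dim V_k$; it says nothing about the integer $n+2=\mathrm{rank}\,T$, which is an entirely different quantity. The paper's argument is the correct one: condition~(a) of Theorem~\ref{GenDW} asserts that $\mathbf{R}^{n+2}$ sits as a subspace of $W=H^0(\mathrm{Gr}_p(\mathbf{C}^{p+q}),\mathcal O(k))$ (regarded as a real vector space), whence $n+2\le\dim_{\mathbf R}W$; Bott--Borel--Weil then identifies $W$ and fixes the maximal target $\mathrm{Gr}_N(\mathbf R^{N+2})$. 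You should replace your injectivity sentence with this argument.
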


The formal relation of this theorem to Theorem \ref{GenDW}, and
consequently its proof, is the same as that of the Main Theorem (Theorem 3.6) in \cite{MaNa21}. Nevetheless, we reproduce it here with minor changes for the sake of
clarity and completeness.

\begin{proof}
The constraint $n\leq N$ is a consequence of (a) in Theorem \ref{GenDW} and Bott--Borel--Weil theorem.

It follows from  (c) 
in Theorem \ref{GenDW}  that $\G\mathrm{S}(\mathfrak{m}V_0,V_0)^\perp$ is a parametrization
of the space of full holomorphic isometric embeddings 
$f: \mathrm{Gr}_p\mathbf{C}^{p+q}
\to \mathrm{Gr}_{N}(\mathbf{R}^{N+2})$ of degree $k.$ 
Since the standard map into $\mathrm{Gr}_{N}(\mathbf{R}^{N+2})$ 
is the composite of the Kodaira embedding
$\mathbf \mathrm{Gr}_m(\mathbf{C}^{m+2}) \to \mathbf{C}P^{\frac{N}{2}}$ and 
the totally geodesic embedding 
$\mathbf{C}P^{\frac{N}{2}} \hookrightarrow \mathrm{Gr}_{N}(\mathbf{R}^{N+2})$,  
we can apply Theorem \ref{quadmodcpx} and Corollary \ref{intersection 2} 
to conclude that $\mathcal M_k$ is 
a bounded connected \emph{open} convex body in  $V_k$ with the
topology induced by the $L^2$ scalar product.

Under the natural compactification in the $L^2$-topology, 
the boundary points correspond to endomorphisms $T$ which are not positive definite,
but positive semi-definite. 
It follows from Theorem \ref{GenDW} that each of these endomorphisms defines 
a full holomorphic isometric embedding 
$\mathrm{Gr}_p(\mathbf{C}^{p+q}) \to \mathrm{Gr}_{h}(\mathbf{R}^{h+2})$,  
of degree $k$ with $h=2k-{\rm dim}\;\mathrm{Ker}\;T,$ whose target embeds in $\mathrm{Gr}_{N}(\R^{N+2})$ 
as a totally geodesic submanifold. 
The image of the embedding $\mathrm{Gr}_h(\R^{h+2}) \hookrightarrow \mathrm{Gr}_{N}(\R^{N+2})$ 
is determined by the common zero set of
sections in  $\mathrm{Ker}\;T$. 
(See also the Remark after Proposition 5.14 in \cite{Na13} for the geometric meaning of 
the compactification of the moduli space.)  
\end{proof}

\begin{rem} 
It follows 
from Corollary 5.18 in \cite{Na15} that 
the first condition in (\ref{HDW 2}) is automatically satisfied.  
Alternatively, using the same techniques as in Lemma  \ref{intersection} it can be shown that
\[{\rm GS}(V_0,V_0)\cap {\rm H}(W)^\perp = \mathbf{V}(2k,2k,0,\dots,0).\]
\end{rem}

The centraliser $S^1\cong \mathrm U(1)$ of the holonomy subgroup $\mathrm K=\mathrm  S(\mathrm U(m)\times \mathrm U(2))$ of the structure group of the line bundle acts on $\mathcal M_k.$  
For the general theory, see \cite{Na15}.  Therefore, the same
argument and proof as the one of Theorem 8.1 in \cite{MNT} applies leading to 

\begin{cor}\label{imod}
The  moduli space $\mathbf M_k$ of image equivalence classes of holomorphic isometric embeddings  $\mathrm{Gr}_m(\mathbf{C}^{m+2})\to \mathrm{Gr}_{N}(\mathbf{R}^{N+2}), N+2=\dim V_k,$ 
of degree $k,$  
is
\[\mathbf M_k=\mathcal M_k\slash S^1.\] 
\end{cor}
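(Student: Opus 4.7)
The plan is to adapt the argument of Theorem 8.1 in \cite{MNT} to the present setting, which amounts to identifying image equivalence with the combination of gauge equivalence and an action of the centraliser $S^1$ on $\mathcal M_k$.

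First, I would make explicit the action of $S^1\cong\mathrm{U}(1)$, the centraliser of $\mathrm K=\mathrm S(\mathrm U(p)\times\mathrm U(q))$ inside the bundle automorphisms of $(\mathcal O(k),h_k)$, on $W=H^0(\mathrm{Gr}_p(\mathbf{C}^{p+q}),\mathcal O(k))$. Because this $S^1$ commutes with the $\mathrm{SU}(p+q)$--action, it acts unitarily and preserves both the complex structure and the $L^2$--inner product on $W$. Consequently it acts on $\mathrm{Sym}(W)$ by conjugation $T\mapsto \sigma T\sigma^{-1}$, stabilising the subspace $V_k$ of Corollary \ref{intersection 2} and preserving each of conditions (a)--(c) in Theorem \ref{GenDW}. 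This shows $\mathcal M_k$ is $S^1$--stable, and because any two maps lying in the same $S^1$--orbit produce isometric images via \eqref{HDW5}, there is a well--defined surjection $\mathcal M_k/S^1 \to \mathbf M_k$.

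Second, I would show this surjection is injective. Suppose $T_1,T_2\in\mathcal M_k$ yield maps $f_1,f_2$ that are image equivalent through an isometry $\Phi$ of $\mathrm{Gr}_N(\mathbf R^{N+2})$ with $\Phi\circ f_1=f_2$. Then $\Phi$ lifts to a bundle isomorphism $f_1^{\ast}Q \to f_2^{\ast}Q$ compatible with canonical connections; through the natural identifications $f_i^{\ast}Q\cong \mathcal O(k)$ provided by Theorem \ref{GenDW}, this produces a connection--preserving automorphism of $(\mathcal O(k),h_k)$ covering the identity of the base. Plugging the resulting relation back into \eqref{HDW5} then forces $T_2=\sigma T_1\sigma^{-1}$ for a unique $\sigma\in S^1$, completing the bijection $\mathbf M_k\cong\mathcal M_k/S^1$.

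The main technical obstacle is the rigidity step: identifying the group of connection--preserving automorphisms of $(\mathcal O(k),h_k)$ covering the identity of $\mathrm{Gr}_p(\mathbf{C}^{p+q})$, modulo the proper gauge transformations of the bundle, with exactly the centraliser $S^1$ of the holonomy $\mathrm K$. This relies on the simple connectedness of $\mathrm{Gr}_p(\mathbf{C}^{p+q})$, the irreducibility of the canonical connection on $\mathcal O(k)$, and the fullness hypothesis on the embeddings, precisely as in \cite{MNT,Na15}. Once this rigidity is in place, the rest of the proof is mechanical and the corollary follows.
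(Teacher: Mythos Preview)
Your proposal is correct and follows exactly the approach indicated by the paper, which does not give an independent proof but simply notes that the centraliser $S^1$ of the holonomy acts on $\mathcal M_k$ and then invokes the argument of Theorem~8.1 in \cite{MNT} together with the general theory in \cite{Na15}. Your sketch is a faithful unpacking of that argument, with the rigidity step (identifying connection--preserving automorphisms of $(\mathcal O(k),h_k)$ over the identity with $S^1$) being precisely the point where \cite{MNT,Na15} are needed.
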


\begin{rem}
There is a natural, induced complex structure defined on $\mathcal M_k$ from its embbedding in $ V_k.$ It is also equipped with a compatible metric induced from the inner product, so $\mathcal M_k$ is a K\"ahler manifold.
The aforesaid $S^1$--action preserves the K\"ahler structure on $\mathcal M_k.$ The moment map $\mu:\mathcal M_k\to \mathbf{R}\;:\; |Id-T^2|^2$ induces the K\"ahler quotient, and 
$\mathbf M_k$ has a foliation   
whose general leaves are the complex projective spaces. 
\end{rem}

\section{A worked--out example: embedding of $\mathrm Gr_{q}(\mathbf C^{2q})$}

This example generalises the case of the holomorphic isometric 
embedding of the complexified, compactified Minkowski Space $\mathrm Gr_2(\mathbf C^4)$ into quadrics,  discussed in \cite{MaNa21}. For that end, assume $p=q$ in the general discussion above. Then, we have that
\begin{eqnarray*}
\otimes^2 \mathbf{F}(k\varpi_q) 
& = & \bigoplus_{i_1,i_2,\dots, i_q=0}^{k\geq \sum_{\alpha=1}^q i_\alpha}\mathbf{V}(2k-i_q, 2k-(i_{q-1}+i_q),\dots,2k-\sum_{\alpha=1}^q i_\alpha,\\
& & \qquad\qquad 
\sum_{\alpha=1}^q i_\alpha, \sum_{\alpha=2}^q i_\alpha,\dots, i_{q-1}+i_q, i_q)\\
&=& 
\bigoplus_{i_\alpha=0}^{k\geq \sum_{\alpha=1}^q i_\alpha} 
\mathbf{F}(
\underbrace{
i_{q-1},i_{q-2},\dots,i_2,i_1}_{q-1},2k-2\sum_{\alpha=1}^q i_\alpha,
\underbrace{i_1,i_2,\dots, i_{q-1}}_{q-1})\\
&=&
\bigoplus_{j_\alpha=0}^{k\geq \sum_{\alpha=0}^{q-1} j_\alpha}  F\left(\sum_{i=1}^{q-1} j_i \varpi_i + \left(2k-2\sum_{\alpha=0}^{q-1} j_\alpha\right)\varpi_q+ \sum_{i=1}^{q-1}j_{q-i}\varpi_{q+i}\right).
\end{eqnarray*}
Comparing this last expression with the general one for the case $p>q,$ the difference is the vanishing of the term $j_0 \varpi_{2q}.$ Hence,
\[Y\cdot \check w =  y^{-2k +\frac{p+q}{pq}\sum_{i=1}^{q-1}(q-i)j_i+2j_0}\check w.\]
The term $2j_0$ is the same as $(1+\frac{q}{p})j_0$ when $p=q.$ Hence, if $p=q$ the content of Lemma \ref{lowestF} would be restated as

\begin{lemma}
The lowest weight of 
\[F\left(\sum_{i=1}^{q-1} j_i \varpi_i + \left(2k-2\sum_{\alpha=0}^{q-1} j_\alpha\right)\varpi_q+ \sum_{i=1}^{q-1}j_{q-i}\varpi_{q+i}\right)\]
is
\[-2k + \frac2q \sum_{i=1}^{q-1}(q-i)j_i + 2j_0.\]
\end{lemma}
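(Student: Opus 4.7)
The statement is a specialisation of Lemma \ref{lowestF} to the case $p=q$. The plan is to recompute the action of the central generator $Y$ on the lowest--weight vector $\check w$ directly from formula (\ref{actionofY}), being mindful that, in this case, $\varpi_{2q}$ is not a fundamental weight of $\mathrm{SU}(2q),$ so the last sum in the weight expression runs only up to $q-1,$ and the contribution of $j_0$ enters solely through the coefficient of $\varpi_q$.

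I would begin by reading off from the weight
\[
\sum_{i=1}^{q-1} j_i \varpi_i + \left(2k-2\sum_{\alpha=0}^{q-1} j_\alpha\right)\varpi_q+ \sum_{i=1}^{q-1}j_{q-i}\varpi_{q+i}
\]
the coefficients $k_i=j_i$ for $1\leq i \leq q-1,$ $k_q=2k-2\sum_{\alpha=0}^{q-1}j_\alpha,$ and $k_{q+i}=j_{q-i}$ for $1\leq i\leq q-1.$ Substituting these into (\ref{actionofY}) with $p=q,$ the first sum contributes
\[
-\frac{1}{q}\sum_{i=1}^{q}i\,k_i = -\frac{1}{q}\sum_{i=1}^{q-1} i\,j_i - \left(2k-2\sum_{\alpha=0}^{q-1} j_\alpha\right),
\]
while the second sum, after the reindexing $j\mapsto q-j,$ reduces to $-\frac{1}{q}\sum_{i=1}^{q-1} i\,j_i.$

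Adding both contributions and regrouping, the exponent of $y$ in $Y\cdot\check w$ becomes
\[
-2k-\frac{2}{q}\sum_{i=1}^{q-1} i\,j_i + 2\sum_{\alpha=0}^{q-1}j_\alpha
=-2k+\frac{2}{q}\sum_{i=1}^{q-1}(q-i)\,j_i+2j_0,
\]
which is precisely the announced lowest--weight.

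The entire computation is routine, and no genuine obstacle arises. The only subtle point, worth emphasising, is the bookkeeping of the $j_0$ coefficient: while in the general $p\neq q$ setting $j_0$ appears both in the coefficient of $\varpi_q$ and in that of $\varpi_{2q},$ giving a combined factor $(1+q/p),$ in the $p=q$ case $\varpi_{2q}$ simply disappears and $j_0$ enters only through $\varpi_q.$ One verifies that both bookkeepings yield the same value, $2j_0,$ so the formal specialisation $p=q$ of the expression in Lemma \ref{lowestF} is consistent with the independent computation performed here.
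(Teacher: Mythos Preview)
Your proof is correct and follows essentially the same route as the paper. The paper does not give an independent argument either: it simply observes that, upon setting $p=q$ in Lemma \ref{lowestF}, the coefficient $(p+q)/(pq)$ becomes $2/q$ and $(1+q/p)j_0$ becomes $2j_0$, so the general formula specialises directly. You have additionally redone the computation from formula (\ref{actionofY}) with the explicit coefficients $k_i$, which is a harmless (and slightly more self-contained) verification of the same specialisation.
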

Now, from Lemma \ref{mV0}, 
\[\mathfrak m  V_0 =\mathbf C^q\otimes (\mathbf C^q)^* \otimes \mathbf C_{-k+\frac2q}.\]
Therefore, $\mathrm{GS}(\mathfrak m V_0, V_0)$ has the weight
\[-k-k+\frac2q = -2k+\frac2q.\]

The lowest weight of 
\[F\left(\sum_{i=1}^{q-1} j_i \varpi_i + \left(2k-2\sum_{\alpha=0}^{q-1} j_\alpha\right)\varpi_q+ \sum_{i=1}^{q-1}j_{q-i}\varpi_{q+i}\right)\] is lower than the one of $\mathrm{GS}(\mathfrak m V_0,V_0)$ in $\mathrm S^2(\mathbf{F}(k \varpi_q))$ if and only if 
\[-2k+\frac2q\sum_{i=1}^{q-1}(q-i)j_i+2j_0 \leq -2k + \frac2q,\]
equivalently
\[\frac2q\left(-1 + \sum_{i=1}^{q-1}(q-i)j_i\right) + 2j_0\leq 0\]
if and only if
\[\sum_{i=1}^{q-1}(q-i)j_i-1\leq 0,\qquad j_0=0.\]
Thus,
\[(q-1)j_1+(q-2)j_2+\dots+ 2j_{q-2}+j_{q-1}\leq 1,\qquad j_0=0.\]
It follows that $j_{q-1}\in \{0,1\}$ and the others terms must vanish. As a consequence,
\[\mathrm{GS}(\mathfrak m V_0,V_0)\subset 
\underbrace{\mathbf{F}(2k \varpi_q)}_{j_{q-1}=0}\oplus 
\underbrace{\mathbf{F}((2k-2)\varpi_q+\varpi_{q+1})}_{j_{q-1}=1}\]
and then again, since $\mathbf{F}(2k\varpi_q)\in \mathrm S^2(\mathbf{F}(k\varpi_q))$ but $\mathbf{F}((2k-2)\varpi_q+\varpi_{q+1})\subset \Lambda^2(\mathbf{F}(k \varpi_q))$ we have that
\[\mathrm{GS}(\mathfrak m V_0,V_0)=\mathbf{F}(2k\varpi_q).\]

The real dimension of $V_k$ is therefore computed
as in the general case Eqn. (\ref{dimension}) where now $n=2q,$
that is, using
\[\mathbf d_{2q}(k\varpi_q)=\prod_{i=1}^q\prod_{j=1}^k \frac{2q-i+j}{1+(q-i)+(k-j)},
\quad
\mathbf d_{2q}(2k\varpi_q)=\prod_{i=1}^q\prod_{j=1}^{2k} \frac{2q-i+j}{1+(q-i)+(2k-j)}.
\]
Moreover, if $q$ is even, say $q=2m,$ then $\Lambda^q\mathbf C^{2q}=\Lambda^{2m}\mathbf C^{4m}$ has an invariant real structure, which will be denoted by $\sigma,$ determining a real subspace $U=(\Lambda^{2m}\mathbf{C}^{4m})^{\mathbf R}\subset \Lambda^{2m}\mathbf{C}^{4m}.$ Therefore, if $k=1$ we obtain a one--parameter family of holomorphic isometric embeddings generalising  Corollary 3.7 of \cite{MaNa21}
\begin{cor}
There exists a one--parameter family $\{f_t\}$ with $t\in [0,1]$ of
$\mathrm{SU}(4m)$--equivariant non--congruent holomorphic isometric embeddings of degree 1
of $\mathrm{Gr}_{2m}(\mathbf C^{4m})$ into complex quadrics. The mapping $f_0$ corresponds to the Kodaira
embedding, while $f_1$ is the 
real standard map defined by the pair $(\mathcal{O}(1),U).$
\end{cor}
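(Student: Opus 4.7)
The plan is to specialize the general moduli description of Theorem \ref{gmod1} to $p=q=2m$, $k=1$, use Schur's lemma together with the real structure on $W=\mathbf{F}(\varpi_{2m})=\Lambda^{2m}\mathbf{C}^{4m}$ to extract the $\mathrm{SU}(4m)$-invariant axis in $V_1$, and match its two endpoints to the Kodaira embedding and the real standard map respectively.

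First, I would observe that since $q=2m$ is even, the wedge product $\Lambda^{2m}\mathbf{C}^{4m}\otimes\Lambda^{2m}\mathbf{C}^{4m}\to\Lambda^{4m}\mathbf{C}^{4m}\cong\mathbf{C}$ is $\mathrm{SU}(4m)$-invariant, non-degenerate and symmetric; combined with the invariant Hermitian inner product on $W$, it produces the antilinear involution $\sigma:W\to W$ mentioned in the paragraph preceding the corollary, with $(+1)$-eigenspace $U$. A direct check using $\langle\sigma u,\sigma v\rangle=\overline{\langle u,v\rangle}$ shows that $\sigma$ is symmetric for the real inner product $\mathrm{Re}\langle\cdot,\cdot\rangle$ and that $\mathrm{tr}\,\sigma=0$ because $\dim_{\mathbf{R}}U=\dim_{\mathbf{R}}(iU)$. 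By Schur's lemma the space of $\mathrm{SU}(4m)$-invariant $\mathbf{R}$-linear symmetric endomorphisms of $W$ is then two-dimensional and spanned by $Id$ and $\sigma$; every invariant $T\in\mathrm{Sym}(W)$ has the form $T=aId+b\sigma$ with $a,b\in\mathbf{R}$.

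Next, I would determine which such $T$ actually parametrize a point of $\mathcal{M}_1$ via Theorem \ref{GenDW}. Writing $T^2=(a^2+b^2)Id+2ab\,\sigma$, the second condition in (\ref{HDW 2}) is automatic: by Lemma \ref{intersection} specialized to $k=1$, $\mathrm{GS}(\mathfrak{m}V_0,V_0)=\mathbf{F}(2\varpi_{2m})$, a nontrivial irreducible with no $G$-invariants, while $T^2$ is $G$-invariant. The first condition in (\ref{HDW 2}) is automatic by the Remark after Theorem \ref{gmod1}; on the $G$-invariant axis it reduces to the gauge normalization $a^2+b^2=1$, since $\mathbf{R}\cdot Id\subset\mathrm{GS}(V_0,V_0)$ forces $\mathrm{tr}(T^2)=\mathrm{tr}(Id)$. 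Positive semidefiniteness of $T$ becomes $a\geq|b|$, and the residual $S^1$-action of Corollary \ref{imod} sends $\sigma\mapsto z^2\sigma$, identifying $b$ with $-b$ for $z=\pm i$. Setting $b=t/\sqrt{2}$ with $t\in[0,1]$, this produces the family
\[
T_t=\sqrt{1-t^2/2}\,Id+\tfrac{t}{\sqrt{2}}\,\sigma.
\]

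Finally, I would identify the endpoints via formula (\ref{HDW5}). At $t=0$, $T_0=Id$ is positive definite with $\mathrm{Ker}\,T_0=0$, so $\iota$ is the identity inclusion $W\hookrightarrow W$ and (\ref{HDW5}) recovers the Kodaira embedding induced by $(\mathcal{O}(1),W)$. At $t=1$, $T_1=\tfrac{1}{\sqrt{2}}(Id+\sigma)=\sqrt{2}\,\Pi_U$ has $\mathrm{Ker}\,T_1=iU$ and image $U$, so $\iota$ becomes the inclusion $U\hookrightarrow W$ and (\ref{HDW5}) yields the standard map defined by the real pair $(\mathcal{O}(1),U)$. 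Non-congruence of distinct members follows because $t\mapsto T_t$ is injective into $V_1$, so different $t$ give different gauge classes by Theorem \ref{gmod1}, and the $S^1$-action on the invariant axis has already been quotiented out. The main obstacle is the bookkeeping needed to verify that $\sigma$ lies entirely inside the moduli slice $V_1$: its component along $\mathbf{R}\cdot Id$ vanishes because $\mathrm{tr}\,\sigma=0$, and its component along $\mathrm{GS}(\mathfrak{m}V_0,V_0)=\mathbf{F}(2\varpi_{2m})$ vanishes because $\sigma$ is $G$-invariant while $\mathbf{F}(2\varpi_{2m})$ is a nontrivial irreducible.
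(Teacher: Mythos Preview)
Your argument is essentially correct and far more detailed than the paper's treatment, which offers no proof beyond the sentence ``Therefore, if $k=1$ we obtain a one--parameter family \dots\ generalising Corollary 3.7 of \cite{MaNa21}''; the authors simply invoke the existence of the real structure $\sigma$ and defer to the earlier paper. Your explicit construction of $T_t$ via the $\mathrm{SU}(4m)$-invariant axis and the identification of the endpoints through formula \eqref{HDW5} is exactly the content that \cite{MaNa21} supplies in the $m=1$ case, so you are reconstructing the intended argument.

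There is, however, one small miscount you should repair. The space of $\mathrm{SU}(4m)$-invariant $\mathbf R$-linear symmetric endomorphisms of $W$ is three-dimensional, not two: as a real representation $W\cong U\oplus iU$ with $U$ absolutely irreducible, so $\mathrm{End}_{\mathbf R}(W)^G\cong M_2(\mathbf R)$, and the symmetric part is spanned by $Id$, $\sigma$, and $J\sigma$ (one checks $(J\sigma)^T=\sigma^T J^T=\sigma(-J)=J\sigma$ using $\sigma J=-J\sigma$). Equivalently, under the identification $\mathrm H(W)^\perp\cong \mathrm S^2_{\mathbf C}(W)$ the $G$-invariants form a complex line, real-spanned by $\sigma$ and $J\sigma$. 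This does not damage your conclusion: the $S^1$-action you invoke conjugates $\sigma$ to $\cos(2\theta)\sigma+\sin(2\theta)J\sigma$, so it rotates the whole $(\sigma,J\sigma)$-plane rather than merely flipping the sign of $b$. After that quotient the invariant locus is genuinely one-dimensional, and your family $T_t=\sqrt{1-t^2/2}\,Id+(t/\sqrt{2})\sigma$ with $t\in[0,1]$ is a faithful parametrisation of it. Once you correct the Schur count and the description of the $S^1$-action accordingly, the proof stands.
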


\end{document}